\pgfplotsset{compat=newest}
\newtheorem{lemma}{Lemma}[section]
\newtheorem{theorem}{Theorem}[section]
\newtheorem{definition}{Definition}[section]
\newtheorem{example}{Example}[section]
\numberwithin{equation}{section}
\def\C{\mathbb C}
\def\R{\mathbb R}
\def\Re{\mathrm{Re}}
\def\lo{\mathbf{o}}
\def\res{\mathrm{res}}
\def\spr{\rho} 
\def\rqs{g} 
\def\objf{F} 
\DeclareMathOperator*{\argmax}{arg\,max}
\DeclareSymbolFont{yhlargesymbols}{OMX}{yhex}{m}{n}
\DeclareMathAccent{\wideparen}{\mathord}{yhlargesymbols}{"F3}
\title{
%
Variational Characterization of Monotone Nonlinear Eigenvector Problems 
and Geometry of Self-Consistent-Field Iteration 
}
\author{
	Zhaojun Bai\thanks{
	Department of Computer Science, University of
	California at Davis, Davis, CA 
	(\url{zbai@ucdavis.edu}).
}
	~and Ding Lu\thanks{
Department of Mathematics, University of Kentucky,
	Lexington, KY 
	(\url{Ding.Lu@uky.edu}).
}
}
\date{version date September 20, 2022}
\begin{document}
\maketitle

\begin{abstract}
This paper concerns a class of
monotone eigenvalue problems with eigenvector nonlinearities (mNEPv).
The mNEPv is encountered in applications such as 
the computation of joint numerical radius of matrices,
best rank-one approximation of third order partial symmetric tensors, 
and distance to singularity for dissipative Hamiltonian 
differential-algebraic equations. 
We first present a variational characterization of the mNEPv.
Based on the variational characterization, we provide 
a geometric interpretation of the self-consistent-field (SCF)
iterations for solving the mNEPv, prove the global
convergence of the SCF, and devise an accelerated SCF. 
Numerical examples from a variety of applications    
demonstrate the theoretical properties
and computational efficiency of the SCF and its acceleration. 
\end{abstract}

\noindent \textbf{Keywords.}
  nonlinear eigenvalue problem,
  self-consistent field iteration,
  variational characterization,
  geometry of SCF,
  convergence analysis

\medskip

\noindent \textbf{MSC Codes.}
65F15, 65H17

\tableofcontents

\section{Introduction}

We consider the following eigenvector-dependent nonlinear eigenvalue problem: 
\begin{equation}\label{eq:nepv}
H(x)\, x = \lambda\,x,
\end{equation}
where $H(x)$ is a Hermitian matrix-valued function of the form
\begin{equation}\label{eq:hx}
H(x):= \sum_{i=1}^m h_i(x^HA_ix) \, A_i,
\end{equation}
and $A_1,\dots,A_m$ are $n$-by-$n$ Hermitian matrices, 
$h_1,\dots,h_m $ are differentiable and non-decreasing functions over $\R$.
The goal is to find a unit-length vector $x\in\C^n$ 
and a scalar $\lambda\in\R$ satisfying \eqref{eq:nepv} 
and, furthermore, $\lambda\, (= x^H H(x) x)$ is the largest eigenvalue of $H(x)$.
The solution vector $x$ is called an eigenvector of the eigenvalue problem~\eqref{eq:nepv}
and $\lambda$ is the corresponding eigenvalue.
Since $H(\gamma x) \equiv H(x)$ for any $\gamma\in\C$ 
with $|\gamma|=1$, if $x$ is an eigenvector, then so is $\gamma x$. 

The matrix-valued function $H(x)$ in~\eqref{eq:hx} is a linear combination
of constant matrices $A_1,\ldots, A_m$
with monotonic functions $h_1, \ldots, h_m$. 
We say $H(x)$ is of a monotone affine-linear structure
and, for simplicity, call the eigenvalue problem~\eqref{eq:nepv} 
a monotone NEPv, or mNEPv.

In \Cref{sec:varnepv}, we will see that the mNEPv~\eqref{eq:nepv} 
is intrinsically related to the following maximization problem:
\begin{equation} \label{eq:maxf}
\max_{x \in \C^n,\, \|x\|=1}
\Big\{ \objf(x) := \sum_{i=1}^m \phi_i\left(x^HA_ix\right) \Big\},  
\end{equation}
where $\phi_i$ are the anti-derivatives of $h_i$, i.e., 
$\phi^{'}_i(t)  = h_i(t)$, for $i=1,\dots,m$. 
Since $h_i$ are differentiable and non-decreasing, 
$\phi_i$ are twice-differentiable and convex functions. 
We call~\eqref{eq:maxf} an associated maximization
of the mNEPv~\eqref{eq:nepv}, or aMax.

\medskip 
The mNEPv~\eqref{eq:nepv} is a special class of 
the eigenvalue problems with eigenvector nonlinearities (NEPv).  
NEPv have been extensively studied in the Kohn--Sham 
density functional theory 
for electronic structure calculations~\cite{Martin:2004,Szabo:2012}
and the Gross--Pitaevskii eigenvalue problem, which is a
nonlinear Schr\"{o}dinger equation used in quantum physics to 
describe the ground states of 
ultracold bosonic gases~\cite{Bao:2004,Jarlebring:2014}.
NEPv have also been found in a variety of computational problems 
in data science. For examples, 
Fisher's linear discriminant analysis~\cite{Ngo:2012,Zhang:2013,Zhang:2014}
and its robust version~\cite{Bai:2018}, 
spectral clustering using the eigenpairs of the $p$-Laplacian~\cite{Hein:2009}, 
core-periphery detection in networks~\cite{Tudisco:2019}, 
and orthogonal canonical correlation analysis~\cite{Zhang:2022}.  

Self-Consistent-Field (SCF) iteration is a gateway algorithm
to solve NEPv, much like the power method for solving 
linear eigenvalue problems.  The SCF was introduced in computational
physics back to 1950s~\cite{Roothaan:1951}.
Since then, the convergence analysis of the SCF has long been 
an active research topic in the study of NEPv;
see~\cite{Bai:2022,Cai:2018,Cances:2021,Cances:2000,Liu:2014,Liu:2015,Stanton:1981,Upadhyaya:2021,Yang:2009}.

Although the underlying structure of the mNEPv~\eqref{eq:nepv}
is commonly found in NEPv, it has been largely unexploited in 
previous studies. In this paper, we will conduct an in-depth 
systematical study of the mNEPv and exploit its underlying 
structure.  Theoretically,  we will develop a variational characterization 
of the mNEPv~\eqref{eq:nepv} by maximizers of the aMax~\eqref{eq:maxf}.
Using the variational characterization, we will provide 
a geometric interpretation of the SCF for solving the mNEPv~\eqref{eq:nepv}.
The geometry of the SCF reveals the global convergence of the algorithm. 
Consequently, we will prove the globally monotonic convergence of the SCF. 
Combining with the local convergence analysis of the SCF from previous work, 
we have a full understanding of the types of eigenvectors that the SCF computes.
Finally, we will present an accelerated SCF iteration by exploiting 
the underlying structure of $H(x)$, and demonstrate its 
efficiency with examples from a variety of applications.

The aMax~\eqref{eq:maxf} is interesting in its own right and finds
numerous applications.  One important source of the problems is a
quartic maximization over the Euclidean ball, where $\phi_i(t) = t^2$~\cite{Nesterov:2003}.
In~\Cref{sec:nepvegs}, we will discuss
such quartic maximization problems arising from the joint numerical radius 
computation and the rank-one approximation of partial-symmetric 
tensors. Another application of the aMax~\eqref{eq:maxf} is 
an optimization from the study of distance to singularity for 
dissipative Hamiltonian differential-algebraic equation (dHDAE) systems
and the related higher order dynamical systems~\cite{Mehl:2021}.
In addition, the aMax~\eqref{eq:maxf} arises in robust optimization 
with ellipsoid uncertainty; see e.g.,~\cite{Ben:1998}.
By the intrinsic connection between the mNEPv and the aMax,
we devise an eigenvalue-based approach for solving the aMax 
that can exploit state-of-the-art eigensolvers from numerical linear algebra.

We note that optimization problems involving a function $\objf(x)$ in the form 
of~\eqref{eq:maxf} have been investigated in the literature. 
But such problems are often formulated as the minimization of $\objf(x)$ 
over the real vector space $\mathbb{R}^n$, instead of the maximization 
of $\objf(x)$ over the complex vector space $\mathbb{C}^n$.
Examples of recent studies include the quartic-quadratic 
optimization with $\phi_i(t) = t^2$ or $t$~\cite{Huang:2022,Zhang:2021}
and the Crawford number computation with $\phi_i(t) = t^2$~\cite{Lu:2020}.
For these minimization problems, the eigenvalue-based approaches have
been developed, and the resulting NEPv are governed by 
the equation~\eqref{eq:nepv} with $H(x)$ of the form~\eqref{eq:hx};
see, e.g.,~\cite{Huang:2022,Lu:2020}.  The major distinction is that the target eigenvalue
$\lambda$ is corresponding to the smallest eigenvalue of $H(x)$.
Consequently, the solution and analysis of the resulting NEPv are 
fundamentally different form the mNEPv~\eqref{eq:nepv}. For example, 
the SCF is no longer globally convergent for computing the smallest eigenvalue.

The rest of this paper is organized as follows. 
In~\Cref{sec:varnepv}, we present a variational characterization 
of the mNEPv~\eqref{eq:nepv} by maximizers of the aMax~\eqref{eq:maxf}. 
\Cref{sec:scf} is devoted to the SCF,
where we introduce a geometric interpretation of the SCF,
prove its global convergence,
and discuss an acceleration scheme with implementation issues.
\Cref{sec:nepvegs} is on the applications of the mNEPv~\eqref{eq:nepv}.
Numerical experiments are presented in~\Cref{sec:numex}
and concluding remarks are in~\Cref{sec:conclusion}.

\medskip 
We follow standard notations in matrix computation.
$\R^{m\times n}$ and $\C^{m\times n}$ are the sets of $m$-by-$n$ real
and complex matrices, respectively. 
$\Re(\cdot)$ extracts the real part of a complex matrix or a number.
For a matrix (or a vector) $X$,
$X^T$ stands for transpose,
$X^H$ for conjugate transpose,
$X(i,j)$ for the $(i,j)$-th entry of $X$,
and $\|X\|$ for the matrix 2-norm.
We use $\lambda_{\min}(X)$ and $\lambda_{\max}(X)$
for the smallest and largest eigenvalues of a Hermitian $X$.
The spectral radius (i.e., largest absolute value of eigenvalues)
of a matrix or linear operator is denoted by $\spr(\cdot)$.
Standard little o and big O notations are used: 
$f(x) = \lo(g(x))$  is interpreted as $f(x)/g(x) \to 0$ as $x\to 0$
and 
$f(x) = \mathcal O(g(x))$  is interpreted as $f(x)/g(x) \leq c $ for
some constant $c$ as $x\to 0$.
Other notations will be explained as used.


\section{Variational characterization}\label{sec:varnepv}
Variational characterizations provide powerful tools to the study of
eigenvalue problems, facilitating both theoretical analysis 
and numerical computations. 
A prominent example is the Hermitian linear eigenvalue problem of
the form $Ax = \lambda x$. The variational characterizations, 
also known as the Courant-Fischer principle, of the eigenvalues 
of $A$ are formed using optimizers of the Rayleigh quotient $x^HAx/x^Hx$;
see, e.g.,~\cite{Bhatia:2013,Parlett:1998, Stewart:1990}.
Consequently, bounds for eigenvalues, interlacing and monotonicity 
of eigenvalues can be proved easily with the characterizations.
Variational characterizations have also been developed for 
eigenvalue-dependent nonlinear eigenvalue problems 
of the form $T(\lambda) x = 0$; see a recent survey~\cite{Lampe:2022}
and references therein.
It is also well-known that the NEPv in Kohn-Sham density functional theory 
is derived from the minimization of an energy function in 
electronic structure calculations; see, e.g.,~\cite{Martin:2004,Cances:2021}.
In this section, we provide a variational characterization of
the mNEPv~\eqref{eq:nepv} by exploring its relation to 
the aMax~\eqref{eq:maxf}. 

\subsection{Stability of eigenvectors}\label{sec:stability}

We start with the following NEPv, 
without assuming the structure of $H(x)$ and the 
order of the eigenvalue $\lambda$:
\begin{equation} \label{eq:nepv_general}
H(x) x = \lambda x \quad\text{with}\quad \|x\|=1,
\end{equation} 
where $H(x)$ is Hermitian, differentiable (w.r.t. both real and
imaginary parts of $x$), and unitarily scaling invariant
(i.e., $H(\gamma x) = H(x)$ for any $\gamma \in \C$ with $|\gamma| =1$).
An eigenvector $x$ of the NEPv~\eqref{eq:nepv_general} can be viewed as
an equivalent class 
$[x] :=\{\,\gamma x \mid  \gamma\in\C,\, |\gamma|=1\,\}$,
i.e., a point in the Grassmannian  $\mbox{Gr}(1, \C^n)$,
also known as the complex projective space $\mathbb C\mathbb P^{n-1}$.

Let $x_*$ be an eigenvector of the NEPv~\eqref{eq:nepv_general},
and the corresponding $\lambda_*$ be the $p$-th largest eigenvalue
of $H(x_*)$. Assume $\lambda_*$ is a simple eigenvalue.
Then $[x_*]$ can be interpreted as a solution to the
fixed-point equation over $\mbox{Gr}(1, \C^n)$:
\begin{equation}\label{eq:fixpt}
[x] = \Pi([x]),
\end{equation}
where the mapping $\Pi: \mbox{Gr}(1, \C^n)\to \mbox{Gr}(1, \C^n)$ is
defined by
\begin{equation}\label{eq:pi}
\Pi([x]) := [u(x)], 
\end{equation}
where $u(x)$ is an (arbitrary) unit eigenvector for the $p$-th 
largest eigenvalue of $H(x)$.

To consider the contractivity of the mapping $\Pi$~\eqref{eq:pi} at $[x_*]$, 
we first denote the eigenvalue decomposition of $H(x_*)$ as
\begin{equation}\label{eq:hxeig}
H(x_*) \begin{bmatrix}x_* & X_{*\bot}\end{bmatrix} = 
    \begin{bmatrix}x_* & X_{*\bot}\end{bmatrix} 
    \begin{bmatrix}\lambda_* & \\ & \Lambda_{*\bot} \end{bmatrix},
\end{equation}
where $\begin{bmatrix}x_* & X_{*\bot}\end{bmatrix}\in\C^{n\times n}$
is unitary and $\Lambda_{*\bot}\in \R^{(n-1)\times (n-1)}$ is a diagonal matrix.
We then define an $\R$-linear operator
\footnote{An operator $\mathcal L\colon \C^{m} \to \C^{m}$ 
is called $\R$-linear if 
$\mathcal L(\alpha x+\beta y) = \alpha \mathcal L(x) + \beta\mathcal L (y)$
for all $\alpha,\beta\in\R$ and $x,y\in\C^{m}$.
} 
\begin{equation}\label{eq:lop1}
    \mbox{ $\mathcal L: \C^{n-1} \to \C^{n-1}$ \quad with\quad }
    \mathcal L(z)
    = 
    D_*^{-1} X_{*\bot}^H\left(\, {\bf D}\!H(x_*)[\,X_{*\bot}z\,]\,\right)x_*,
\end{equation}
where $D_*=\lambda_* I_{n-1} - \Lambda_{*\bot}$ is diagonal and
non-singular since $\lambda_*$ is a simple eigenvalue,
and ${\bf D}\!H(x)[\,d\,]$ is the derivative 
of $H$ at $x$ along the direction of $d$:  
\begin{equation}\label{eq:dh1}
{\bf D}\!H(x)[\,d\,] 
:= \lim_{\alpha \in\R,\ \alpha\to 0} \frac{H(x+\alpha d)-H(x)}{\alpha}.
\end{equation}
Let $\rho(\mathcal L)$ be 
the spectral radius of $\mathcal L$, i.e.,
the largest absolute value of the eigenvalues of $\mathcal{L}$.
Then by~\cite[Thm. 4.2]{Bai:2022}, we know that
if $\spr(\mathcal L)<1$, then the fixed-point mapping
$\Pi$~\eqref{eq:pi} is locally contractive at $x_*$;
If $\spr(\mathcal L)>1$, then $\Pi$ is non-contractive at $x_*$;
If $\spr(\mathcal L)=1$, then no immediate conclusion can be drawn for the
contractivity of $\Pi$.\footnote{ 
We mention that the theorem in~\cite[Thm. 4.2]{Bai:2022} is 
stated for the special case of $\lambda_*=\lambda_n$ being the smallest
eigenvalue of $H(x_*)$, but the result holds for a general $p$-th
eigenvalue.}


Return to the mNEPv~\eqref{eq:nepv}, 
in the following lemma, we can show that  
the operator $\mathcal L$ in~\eqref{eq:lop1} is
self-adjoint and positive semi-definite. 
Consequently, it allows us to describe
the contractivity conditions, namely  
$\rho(\mathcal L) < 1$ or $\rho(\mathcal L) \leq 1$,
through the definiteness of a characteristic function. 
We first denote by $\C^{n-1}(\R)$ the vector space $\C^{n-1}$ 
over the field of real numbers $\R$ and by 
\begin{equation}\label{eq:inner}
\langle\, y,z\,\rangle_{\!D}:= \Re(\,y^HDz\,)
\end{equation}
an 
inner product over $\C^{n-1}(\R)$ for a given Hermitian positive definite
matrix $D$ of size $n-1$.

\begin{lemma}\label{lem:regular}
Let $x_*\in \C^n$ be an eigenvector of the mNEPv~\eqref{eq:nepv} 
with a simple eigenvalue $\lambda_*$. 
Then the $\R$-linear operator $\mathcal L$ in~\eqref{eq:lop1} is
self-adjoint and positive semi-definite 
over $\C^{n-1}(\R)$ in the inner product~\eqref{eq:inner}
with $D_*=\lambda_* I_{n-1}-\Lambda_{*\bot}$.  
Moreover, 
\begin{enumerate}[(a)]
    \item\label{lem:regular:a}
	$\spr(\mathcal L) < 1$
    if and only if
    $\varphi(d\,;x_*) < 0$ for all $d\neq 0$ and $d^Hx_* = 0$;

	\item\label{lem:regular:b}
	$\spr(\mathcal L) \leq 1$ if and only if 
	$\varphi(d\,;x_*) \leq 0$ for all $d\neq 0$ and $d^Hx_* = 0$.
	\end{enumerate}
	Here, $\varphi(d\,;x_*)$ is a quadratic function in $d\in \C^n$
	and is parameterized by $x_*$:
	\begin{equation}\label{eq:phifun}
	\varphi(d\,;x_*):= d^H \Big( H(x_*) - (x^H_* H(x_*) x_*)\, I \Big)d +
	2\sum_{i=1}^m h'_i(x^H_*A_ix_*) \cdot (\Re (d^H A_ix_*))^2.
	\end{equation}
\end{lemma}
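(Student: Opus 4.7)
The plan is to prove the lemma by direct computation: first derive a clean closed-form expression for $\mathcal L(z)$, then read off self-adjointness and positive semi-definiteness from that expression, and finally relate its Rayleigh quotient to $\varphi$. I will differentiate $H(x)=\sum_i h_i(x^HA_ix)A_i$ by the chain rule; using that each $A_i$ is Hermitian, this gives
$$
 {\bf D}\!H(x)[d] \;=\; 2\sum_i h'_i(x^HA_i x)\,\Re(d^HA_i x)\,A_i.
$$
Substituting $d=X_{*\bot}z$ into~\eqref{eq:lop1} and setting $a_i := X_{*\bot}^H A_i x_*$ and $\beta_i := h'_i(x_*^HA_i x_*)\ge 0$, the orthogonality $X_{*\bot}^H x_* = 0$ kills the $x_*$-component of $A_i x_*$ and yields the closed form
$$
 \mathcal L(z) \;=\; D_*^{-1}\sum_i 2\beta_i\,\Re(z^Ha_i)\,a_i.
$$

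Next I form the pairing $\langle w,\mathcal L(z)\rangle_{D_*} = \Re(w^HD_*\mathcal L(z))$ in the inner product~\eqref{eq:inner}. The factors $D_*$ and $D_*^{-1}$ cancel, leaving
$$
 \langle w,\mathcal L(z)\rangle_{D_*} \;=\; \sum_i 2\beta_i\,\Re(w^Ha_i)\,\Re(z^Ha_i),
$$
which is manifestly symmetric in $(w,z)$, so $\mathcal L$ is self-adjoint, and non-negative at $w=z$ because $\beta_i\ge 0$ by monotonicity of each $h_i$, so $\mathcal L$ is positive semi-definite.

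For the equivalences (a) and (b), self-adjointness permits the Rayleigh quotient characterization
$$
 \spr(\mathcal L) \;=\; \max_{z\neq 0}\frac{\langle z,\mathcal L(z)\rangle_{D_*}}{\langle z,z\rangle_{D_*}},
$$
so $\spr(\mathcal L) < 1$ (resp.\ $\le 1$) is equivalent to $\langle z,(\mathcal L-I)z\rangle_{D_*}<0$ (resp.\ $\le 0$) for all $z\neq 0$. It then suffices to check the identity
$$
 \varphi(X_{*\bot}z\,;x_*) \;=\; \langle z,\mathcal L(z)\rangle_{D_*} - \langle z,z\rangle_{D_*},
$$
which follows from~\eqref{eq:hxeig}: using $X_{*\bot}^H H(x_*) X_{*\bot} = \Lambda_{*\bot}$ and $x_*^H H(x_*) x_* = \lambda_*$, the Hermitian-form term in~\eqref{eq:phifun} equals $-z^HD_*z=-\langle z,z\rangle_{D_*}$, while the sum-of-squares term matches $\langle z,\mathcal L(z)\rangle_{D_*}$ by the formula above. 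Since $d\mapsto X_{*\bot}^H d$ is a bijection between the nonzero vectors orthogonal to $x_*$ and the nonzero vectors in $\C^{n-1}$, both (a) and (b) follow.

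The main obstacle is careful bookkeeping with the real-part operator: the map $z\mapsto\Re(z^Ha_i)\,a_i$ is $\R$-linear but not $\C$-linear, so self-adjointness, the spectral radius, and the Rayleigh quotient characterization must all be interpreted relative to the real inner product~\eqref{eq:inner} on $\C^{n-1}(\R)$ (equivalently, $\mathcal L$ is viewed as a symmetric operator on a $2(n-1)$-dimensional real space); once that setup is in place, the identification $\varphi = \langle\cdot,\mathcal L\cdot\rangle_{D_*}-\langle\cdot,\cdot\rangle_{D_*}$ is pure algebra and the lemma follows at once.
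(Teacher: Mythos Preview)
Your proposal is correct and follows essentially the same approach as the paper: compute ${\bf D}H(x_*)[X_{*\bot}z]$ explicitly, read off self-adjointness and positive semi-definiteness from the resulting bilinear form $\langle w,\mathcal L(z)\rangle_{D_*}=2\sum_i\beta_i\,\Re(w^Ha_i)\,\Re(z^Ha_i)$, and then identify $\varphi(X_{*\bot}z;x_*)=\langle z,\mathcal L(z)\rangle_{D_*}-\langle z,z\rangle_{D_*}$ via the eigendecomposition~\eqref{eq:hxeig}. The only cosmetic difference is your shorthand $a_i,\beta_i$; one small point worth making explicit is that $\spr(\mathcal L)=\lambda_{\max}(\mathcal L)$ uses positive semi-definiteness (not just self-adjointness), which you have already established.
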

\begin{proof}
To show that $\mathcal L$ is self-adjoint and positive semi-definite,
we first derive from the definition~\eqref{eq:hx} of $H(x)$
that the directional derivative~\eqref{eq:dh1} is given by
\begin{equation*}\label{eq:dh2}
{\bf D}\!H(x)[\,d\,] = 2\sum_{i=1}^m
\Re\left(\,x^HA_id\,\right) \cdot h'_i(x^HA_ix)\cdot A_i.
\end{equation*} 
Therefore the $\mathbb R$-linear operator $\mathcal{L}$ in~\eqref{eq:lop1} 
takes the form of 
\begin{align}\label{eq:lop2}
	\mathcal L(z) 
	&= 2D_*^{-1} \sum_{i=1}^m  \Re(x_*^HA_iX_{*\bot}z) \cdot
	h'_i(x_*^HA_ix_*)\cdot X_{*\bot}^H A_ix_*.
\end{align}
Since $\lambda_*$ is a simple largest eigenvalue,
$D_*=\lambda_* I_{n-1}-\Lambda_{*\bot}$ is a diagonal and positive
definite matrix.
A quick verification shows
\begin{equation} \label{eq:adj}
\text{ $\langle\, \mathcal L(y),z\,\rangle_{\!D_*} 
= 2\sum_{i=1}^m   h'_i(x_*^HA_ix_*) \cdot \Re(x_*^HA_iX_{*\bot}z)
\cdot \Re(x_*^HA_iX_{*\bot}y)
=  \langle\, y,\mathcal
L(z)\,\rangle_{\!D_*}$,}
\end{equation} 
i.e., 
$\mathcal L: \C^{n-1}(\R)\to \C^{n-1}(\R)$ is self-adjoint
w.r.t. the inner product $\langle \cdot,\,\cdot\rangle_{\!D_*}$.

Let $y=z$ in~\eqref{eq:adj}. We obtain 
\begin{equation} \label{eq:posdef} 
\langle\, z,\mathcal L(z)\,\rangle_{\!D_*} 
= 2\sum_{i=1}^m   h'_i(x_*^HA_ix_*) \cdot \Re(x_*^HA_iX_{*\bot}z)^2 \geq 0,
\end{equation} 
where we used the assumption that $h_i$ is non-decreasing.
By~\eqref{eq:adj} and~\eqref{eq:posdef},
$\mathcal L$ is a self-adjoint and positive semi-definite operator.

Now by the eigenvalue variational principle for self-adjoint operators
(see, e.g.,~\cite[Chap~1]{Weinstein:1972}), the spectral radius
\begin{equation}\label{eq:rphmax}
    \spr(\mathcal L) = 
    \lambda_{\max}(\mathcal L) = 
    \max_{z\neq 0} 
    \frac{\langle\, \mathcal L(z), z\,\rangle_{\!D_*}}
	{\langle\, z,z\,\rangle_{\!D_*}}.
\end{equation}
Let $d=X_{*\bot}z$. Then we have
\begin{equation}\label{eq:lzz}
\langle\, z,z\,\rangle_{\!D_*} \equiv z^H(\lambda_* I_{n-1}-\Lambda_{*\bot})z =
d^H(x_*^H H(x_*) x_*\cdot I_n - H(x_*))d,
\end{equation}
where we used the identities $\lambda_* = x_*^H H(x_*) x_*$ and $H(x_*)X_{*\bot}=X_{*\bot}\Lambda_{*\bot}$.
Therefore,
\begin{equation*}
    \spr(\mathcal L) - 1= 
    \max_{z\neq 0} 
    \frac{\langle\, \mathcal L(z), z\,\rangle_{\!D_*}-
		\langle\, z, z\,\rangle_{\!D_*}}
		{\langle \,z, z\,\rangle_{\!D_*}}
	\equiv 
    \max_{z\neq 0,\ d=X_{*\bot}z} 
    \frac{\varphi(d\,;x_*)}{\langle\, z, z\,\rangle_{\!D_*}},
\end{equation*}
where $\varphi$ is from~\eqref{eq:phifun},
and we used~\eqref{eq:posdef} for 
$\langle\, \mathcal L(z), z\,\rangle_{\!D_*}$ 
and~\eqref{eq:lzz} for $\langle\, z,z\,\rangle_{\!D_*}$.
Consequently, $\spr(\mathcal L) < 1$ (or $\spr(\mathcal L) \leq 1$)
if and only if 
$\varphi(d\,; x_*) < 0$ (or $\varphi(d\,; x_*)\leq 0$)
for all $d = X_{*\bot}z$ with $z\neq 0$.
Since $[X_{*\bot},x_*]$ is unitary,
a vector $d = X_{*\bot}z$ for some $z\neq 0$ if and only if 
$d^Hx_* = 0$ with $d\neq 0$.
Results in items~\eqref{lem:regular:a} and~\eqref{lem:regular:b} follow.
\end{proof}

By the standard notion of stability of fixed points
of a mapping in fixed-point analysis, see, e.g.,~\cite{Amann:1976,Berinde:2007}, 
we can classify the stability of the eigenvectors of the mNEPv~\eqref{eq:nepv} 
using the spectral radius $\spr(\mathcal L)$ and, alternatively, 
the characterization function $\varphi$ in \Cref{lem:regular}. 

\begin{definition}\label{def:regular2}
Let $x_*\in \C^n$ be an eigenvector of the mNEPv~\eqref{eq:nepv}
and $\varphi$ be as defined in~\eqref{eq:phifun}.
Then $x_*$ is a \emph{stable eigenvector} 
if $\varphi(d\,;x_*)<0$ for all $d\neq 0$ and $d^Hx_* = 0$, 
and a \emph{weakly stable eigenvector} if
$\varphi(d\,;x_*)\leq 0$ for all $d\neq 0$ and $d^Hx_* = 0$.
Otherwise, $x_*$ is called a \emph{non-stable eigenvector}. 
\end{definition}

Note that \Cref{def:regular2} does not explicitly require 
$\lambda_*(H(x_*))$ is a simple eigenvalue, since the characteristic
function $\varphi$~\eqref{eq:phifun} is still well-defined in the
case of non-simple eigenvalue.
In addition, we note that for a \emph{stable eigenvector} $x_*$,
the corresponding $\lambda_*$ is necessarily a simple eigenvalue of $H(x_*)$.
Otherwise, there is another eigenvector $\widetilde x$
of $\lambda_*=\lambda_{\max}(H(x_*))$ orthogonal to $x_*$.
By letting $d=\widetilde x$ and recalling $h'_i(t)\geq 0$,
we derive from~\eqref{eq:phifun} that $\varphi(d\,; x_*)\geq 0$, 
which contradicts the condition for a stable eigenvector
that $\varphi(d\,;x_*)<0$ for all $d\neq 0$ and $d^Hx_* = 0$. 

\subsection{Variational characterization}\label{sec:variation}

The following theorem provides a variational characterization of 
the mNEPv~\eqref{eq:nepv} through the aMax~\eqref{eq:maxf}. 
We first recall a standard notion in optimization
(see, e.g.,~\cite[Sec. 2.1]{Nocedal:2006})
that a unit vector $x$ is 
called a \emph{local maximizer} of the aMax~\eqref{eq:maxf}
if there exists $\varepsilon >0$ s.t.
\begin{equation}\label{eq:strictmax}
\objf(x) \geq \objf\left(\frac{x+d}{\|x+d\|}\right)\quad\text{for all $d\in\C^n$ 
with $d^Hx=0$ and $\|d\|\leq \varepsilon $}, 
\end{equation}
and $x$ is a \emph{strict local maximizer} if 
the inequality for $\objf$ in~\eqref{eq:strictmax}
holds strictly. 

\begin{theorem} \label{thm:vc}
Let $x\in\mathbb C^n$ be a unit vector.
\begin{enumerate}[(a)]
\item \label{i:thm:vc:1}
    If $x$ is a stable eigenvector of the mNEPv~\eqref{eq:nepv},
        then $x$ is a strict local maximizer of the aMax~\eqref{eq:maxf}.

\item \label{i:thm:vc:2}
        If $x$ is a local maximizer of the aMax~\eqref{eq:maxf},
        then $x$ is a weakly stable eigenvector of the mNEPv~\eqref{eq:nepv}.
\end{enumerate}
\end{theorem}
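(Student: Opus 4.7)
The plan is to prove both parts by a second-order Taylor expansion of $F$ along an arc on the unit sphere and then match the leading term with the quadratic form $\varphi(\cdot\,;x)$ of~\eqref{eq:phifun}. Since $F$ is invariant under unit-modulus rescalings of its argument, it suffices to consider variations $d\in\C^n$ with $d^Hx=0$ and expand $y=(x+d)/\|x+d\|$. Using $\|x+d\|^2=1+\|d\|^2$, a direct expansion of $y^HA_iy$ together with a second-order Taylor expansion of $\phi_i$ (with $\phi_i'=h_i$ and $\phi_i''=h_i'$) yields, after summing over $i$,
\begin{equation*}
F(y)-F(x) = 2\Re(d^HH(x)x) + d^H H(x) d - (x^HH(x)x)\|d\|^2 + 2\sum_{i=1}^m h_i'(x^HA_ix)(\Re(d^HA_ix))^2 + \mathcal O(\|d\|^3).
\end{equation*}
Whenever $H(x)x=(x^HH(x)x)x$, the linear term vanishes on $\{d:d^Hx=0\}$ and the remaining quadratic terms reassemble exactly into $\varphi(d\,;x)$. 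Getting this expansion right is the main obstacle: one has to combine the $1/\|x+d\|^2$ normalization with the Taylor expansion of each $\phi_i$ so that all $x$-direction contributions cancel and the quadratic remainder collapses cleanly to $\varphi(d\,;x)$.

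For part~\eqref{i:thm:vc:1}, if $x$ is a stable eigenvector then by the remark following~\Cref{def:regular2} the associated eigenvalue is simple, the eigenvalue equation holds, and $\varphi(d\,;x)<0$ on all nonzero $d\perp x$. Because $\varphi(\cdot\,;x)$ is a real quadratic form on $\C^n(\R)$, compactness of the unit sphere in $\{d:d^Hx=0\}$ yields a constant $c>0$ with $\varphi(d\,;x)\leq -c\|d\|^2$. The expansion above then forces $F(y)-F(x)\leq -c\|d\|^2+\mathcal O(\|d\|^3)<0$ for all sufficiently small nonzero $d$, so $x$ is a strict local maximizer.

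For part~\eqref{i:thm:vc:2}, suppose $x$ is a local maximizer. Nonpositivity of $F(y)-F(x)$ to leading order, applied to $\pm d$ and to $id$ (all with $d\perp x$), forces $d^HH(x)x=0$ for every $d\perp x$, and hence $H(x)x=\lambda x$ for some $\lambda\in\R$. To verify $\lambda=\lambda_{\max}(H(x))$, suppose otherwise; then there exists a unit eigenvector $\widetilde x\perp x$ with $H(x)\widetilde x=\mu\widetilde x$ and $\mu>\lambda$, so by $h_i'\geq 0$ the expansion gives $\varphi(\widetilde x\,;x)\geq\mu-\lambda>0$, and testing with $\epsilon\widetilde x$ yields $F(y)-F(x)=\epsilon^2\varphi(\widetilde x\,;x)+\mathcal O(\epsilon^3)>0$ for small $\epsilon>0$, contradicting local maximality. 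Thus $x$ is an eigenvector of the mNEPv. Finally, if $\varphi(d\,;x)>0$ for some $d\perp x$, the same rescaling argument with $\epsilon d$ contradicts local maximality, so $\varphi(\cdot\,;x)\leq 0$ on $\{d:d^Hx=0\}$, and $x$ is weakly stable.
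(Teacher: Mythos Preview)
Your proof is correct and follows essentially the same route as the paper's own proof: expand $F$ at $(x+d)/\|x+d\|$ for $d\perp x$, identify the first-order term with $2\Re(d^HH(x)x)$ and the second-order term with $\varphi(d\,;x)$, and then read off parts~(a) and~(b) from the sign of $\varphi$. One small technical point: under the paper's hypotheses ($h_i$ merely differentiable, hence $\phi_i$ only $C^2$), the Taylor remainder in your displayed expansion is $o(\|d\|^2)$ rather than $\mathcal O(\|d\|^3)$; the paper writes $o(\|d\|^2)$ accordingly, and your compactness and rescaling arguments go through unchanged with that weaker remainder.
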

\begin{proof} 
Let $\widehat x = (x+d)/\|x+d\|$, then
\[ 
\widehat x^H A_i  \widehat x
= x^H A_i x + \delta_i
\quad\text{for $i=1,2,\dots, m$},
\]
where
\begin{equation}\label{eq:rqexpand}
	\delta_i := 2\cdot\Re (d^HA_ix) 
    + d^H\left(A_i-(x^HA_ix) I\right)d
    +\mathcal O(\|d\|^3).
\end{equation}
Hence the $i$-th term of $\objf\left(\widehat x\right)$ satisfies
\begin{align*}
\phi_i\left(\widehat x^HA_i \widehat x\right) 
= \phi_i(\rqs_i(x) +\delta_i) 
= 
\phi_i(\rqs_i(x) ) + h_i(\rqs_i(x) )\cdot
\delta_i  + \frac{1}{2}{h'_i(\rqs_i(x) )}\cdot \delta_i ^2 + \lo(\delta_i^2), 
\end{align*}
where $\rqs_i(x) := x^H A_i x $.
Summing over all $\phi_i$ from $i=1$ to $m$, we obtain
\begin{align}
    \objf\left(\widehat x\right)
    & \equiv \sum_{i=1}^m 
    \left[
    \phi_i(\rqs_i(x)) + h_i(\rqs_i(x) )\cdot
    \delta_i  + \frac{1}{2}{h'_i(\rqs_i(x))}\cdot\delta_i ^2 + \lo(\delta_i^2)
    \right] \notag \\
    & = \objf(x) + 2\,\Re(d^HH(x)x) + d^H\left(H(x)-s(x)I\right)d
    +  2 \sum_{i=1}^m h'_i(\rqs_i(x))\cdot \left(\Re(d^HA_ix)\right)^2 +
    \lo(\|d\|^2) \notag  \\
& = \objf(x) + 2\, \Re(d^HH(x)x) + \varphi(d\,;x)  + \lo(\,\|d\|^2), \label{eq:ftx} 
\end{align}
where the second equality is by \eqref{eq:rqexpand} and $s(x):= x^HH(x)x$.

For the result~\eqref{i:thm:vc:1}: We need to show 
the inequality~\eqref{eq:strictmax} holds strictly.  
First, it follows from the NEPv $H(x)x=\lambda x$ and the orthogonality $d^Hx =0$
that $d^HH(x)x =0$. 
So the second term on the right side of~\eqref{eq:ftx}
vanishes, and we have
\begin{equation}\label{eq:ftx1}
\objf\left(\widehat x \right) = \objf(x) + \varphi(d\,; x) +
\lo(\|d\|^2).
\end{equation}
Since the stability of $x$ (\Cref{def:regular2}) implies 
$\varphi(d\,;x) <0$ 
and we can drop $\lo(\|d\|^2)$ (which
is negligible to the quadratic $\varphi(d\,;x) = \mathcal O(\|d\|^2)$), 
\eqref{eq:ftx1} leads to 
$\objf(x)>\objf\left(\widehat x \right)$ as $\|d\|\to 0$. 

For the result~\eqref{i:thm:vc:2}: 
Let $d$ be sufficiently tiny and $d^Hx =0$.
It follows from the local maximality~\eqref{eq:strictmax} 
and the expansion~\eqref{eq:ftx} that 
\begin{equation}\label{eq:fdiff}
	0 \geq \objf\left(\widehat x \right) - \objf(x) 
	= 
	2\cdot \Re(d^HH(x)x) + \varphi(d\,; x) +\lo(\|d\|^2).
\end{equation}
Therefore, the leading first-order term must vanish,
that is, $\Re(d^HH(x)x) = 0 $ for all $d$ with $d^Hx =0$.
This implies that $H(x)x$ and $x$ have common null spaces, i.e.,
\begin{equation}\label{eq:stationary}
    H(x)x = \lambda x,
\end{equation}
for some scalar $\lambda$.

To show that $x$ is a weakly stable eigenvector (\Cref{def:regular2}),
we still need to prove that (i) $\lambda$ in~\eqref{eq:stationary} is 
the largest eigenvalue of $H(x)$, and (ii) it holds that 
$ \varphi(d\,; x) \leq 0 $ for all $d$ with $d^Hx=0$.
For condition (ii), we recall that 
the first-order term in~\eqref{eq:fdiff} vanishes, and hence
\[
 \varphi(d\,; x) +\lo(\|d\|^2) \leq 0
 \quad\Rightarrow\quad
\varphi(d\,; x) \leq 0,
\]
where the last equation is due to $\varphi(d\,; x)$ is a quadratic
function in $d$.
Now as condition~(ii) holds, $\lambda$ must be the largest
eigenvalue of $H(x)$.
Otherwise, there is a $\widetilde\lambda > \lambda$ with
$H(x)\widetilde x =\widetilde \lambda \widetilde x$ and $\widetilde x^Hx
= 0$.
Recall~\eqref{eq:phifun} that $\varphi(d\,; x) \geq d^H(H(x) - (x^HH(x)x) I)d$.
Let $d=\widetilde x$ and we have $\varphi(d\,; x) \geq \widetilde \lambda -\lambda > 0$,
contracting $\varphi(d\,; x) \leq 0$.
\end{proof} 

Results from~\Cref{thm:vc} can be regarded as
second-order
sufficient and necessary conditions for the aMax~\eqref{eq:maxf}.
They are stated in a way to highlight the connections between the local
maximizers of the aMax and the stable eigenvectors of the mNEPv, 
which benefits the analysis of the SCF to be discussed in~\Cref{sec:scf}.
We note that the objective function $\objf(x)$ of the aMax~\eqref{eq:maxf} 
is not holomorphic (i.e. complex differentiable in $x\in\C^n$).
Therefore, the second-order KKT conditions 
(see, e.g.,~\cite[Sec.~12.5]{Nocedal:2006}) 
are not immediately 
applicable.\footnote{Turning the problem to a real variable
optimization, in the real and imaginary parts of $x\in\C^n$,
does not fully address the issue, 
since there will be no strict local maximizers in the standard sense 
for a unitarily invariant $\objf(x)$.}


To end this section, let us discuss three immediate implications 
of the variational characterization in~\Cref{thm:vc}.  
First, given the intrinsic connection between 
the mNEPv~\eqref{eq:nepv} and the aMax~\eqref{eq:maxf},
stable and weakly stable
eigenvectors of the NEPv are of particular interest. 
Since the aMax~\eqref{eq:maxf} always has a global (hence local) maximizer,
\Cref{thm:vc}\eqref{i:thm:vc:2} guarantees the existence of weakly 
stable eigenvectors of the mNEPv~\eqref{eq:nepv}.
Although such eigenvectors may not be unique and may correspond to local but
non-global maximizers of the aMax~\eqref{eq:maxf} (see~\Cref{eg:rand}), 
the connection to the aMax~\eqref{eq:maxf} greatly facilitates the
development and analysis of numerical algorithms for the mNEPv~\eqref{eq:nepv},
e.g., the geometric interpretation of the SCF to be discussed in~\Cref{sec:scf}.

Second, \Cref{thm:vc} is a generalization of the well-known variational 
charaterization of Hermitian eigenvalue problem.
Consider the special case of the mNEPv~\eqref{eq:nepv} with 
$m=1$ and $h_1(t) = 1$, we have
\[
\text{``mNEPv'':}\quad A_1x = \lambda x
\quad\text{and}\quad
\text{``aMax'':}\quad \max_{\|x\|=1} x^HA_1x,
\]
where $\lambda$ is the largest eigenvalue of $A_1$.
Let $\lambda\geq\lambda_2\geq\dots \geq \lambda_n$
be the eigenvalues of $A_1$ with the corresponding orthonormal
eigenvectors $[x,x_2,\dots,x_n]$.
Since a non-zero $d$ in the complement of $[x]$ can be 
written as a linear combination $d = \alpha_2x_2 +\dots+\alpha_nx_n$ 
for some coefficients $\{\alpha_i\}_{i=2}^n$, 
the quadratic function $\varphi$ defined in~\eqref{eq:phifun}
becomes 
\[
\varphi(d\,; x) = d^H(A_1 - \lambda I)d 
= \sum_{i=2}^n \alpha_i^2\,(\lambda_i -\lambda). 
\]
Note that $\varphi(d\,; x)$ is non-positive, and strictly negative 
if $\lambda$ is simple. 
Consequently, \Cref{thm:vc} can be paraphrased as follows:
\begin{enumerate}[(a)]
\setlength\itemsep{0em}
\item
If the largest eigenvalue of $A_1$ is simple,
then the corresponding eigenvector $x$ is a strict local maximizer of
the Rayleigh quotient $ (x^HA_1x)/(x^Hx)$;

\item
If $x$ is a local maximizer of the Rayleigh quotient $(x^HA_1x)/(x^Hx)$,
then $x$ is an eigenvector corresponding to the largest eigenvalue
of $A_1$.
\end{enumerate}
Both statements follow from the well-known 
variational characterization of Hermitian eigenvalue problems:
Eigenvectors of the largest eigenvalue of $A_1$ are corresponding to 
local and global maximizers of $(x^HA_1x)/(x^Hx)$; 
If the largest eigenvalue is simple, then its eigenvector (up to scaling) 
is the only maximizer; see, e.g.,~\cite[Sec.4.6.2]{Absil:2009}. 

Third, if the coefficient matrices $A_i$ of the mNEPv~\eqref{eq:nepv} 
are real symmetric, 
then $H(x)$ is real symmetric and the eigenvectors of the mNEPv 
are all real vectors (up to a unitary scaling).
\Cref{thm:vc}\eqref{i:thm:vc:2} implies that 
the global maximum of the aMax~\eqref{eq:maxf} 
is always achieved at a real vector $x\in\R^n$, namely,
\begin{equation}\label{eq:fxreal}
\max_{x\in\C^n,\, x^Hx=1} \objf(x) 
\quad = 
\max_{x\in\R^n,\, x^Tx=1} \objf(x).
\end{equation}
The two maximizations above are fundamentally
different in nature. The identity holds only due to the
special formulation of $\objf$ and is revealed by~\Cref{thm:vc}. 
We highlight the identity~\eqref{eq:fxreal} because 
many practical optimization problems 
come in the form of the right hand side (with $x\in\R^n$).
We can nevertheless view such a problem as an
aMax~\eqref{eq:maxf} with $x\in\C^n$. 
This allows us to develop a unified treatment for both real and complex
variables, which is highly beneficial as demonstrated for
numerical radius computation in \Cref{eg:quatic}.

\section{Geometry and global convergence of the SCF} \label{sec:scf}


Much like the power method is a gateway algorithm for 
linear eigenvalue problems, 
self-consistent-field, or SCF, 
can be viewed as an analogous algorithm 
for NEPv; see, e.g.,~\cite{Cai:2018,Martin:2004} and references therein.
For the mNEPv~\eqref{eq:nepv}, an SCF iteration starts from an initial unit 
vector $x_0\in\C^n$ and generates 
a sequence of approximate eigenvectors $x_1,x_2\dots,$
via sequentially  solving the linear eigenvalue problems
\begin{equation}\label{eq:scf}
    H(x_k)x_{k+1}  = \lambda_{k+1} \, x_{k+1},
\end{equation}
for $k=0,1,\dots$, where $\lambda_{k+1}$ is the largest eigenvalue of $H(x_k)$
and $x_{k+1}$ is a corresponding unit eigenvector.

In the following, we first present a geometric interpretation of 
the SCF iteration~\eqref{eq:scf}.  Based on the geometric observation, 
we provide a proof of the global convergence of the SCF,
and then we present an acceleration technique and 
discuss related implementation details.

\subsection{Geometry of the SCF}\label{sec:geo}

In~\Cref{sec:variation}, we have discussed the variational characterization
of the mNEPv~\eqref{eq:nepv} via the aMax~\eqref{eq:maxf}.
Consider the change of variables 
\begin{equation} \label{eq:rho2}
y = \rqs(x), 
\end{equation} 
where $\rqs$ is a vector-valued function 
\[ 
\rqs: \R^n \rightarrow \R^m 
\quad \mbox{with} \quad 
\rqs(x): = \left[\,x^H A_1 x,\, \ldots,\, x^H A_m x\,\right]^T.
\] 
The aMax~\eqref{eq:maxf} can then be recast as 
an optimization over the joint numerical range:
\begin{equation}\label{eq:maxphi}
\max_{y \in W(\mathcal A)} 
\Big\{ \phi(y):= \sum_{i=1}^m \phi_i(\,y(i)) \Big\},
\end{equation}
where $y(i)$ is the $i$-th entry of $y$, and 
$W(\mathcal{A})\subset\R^m$ is a (first)
\emph{joint numerical range}
of an $m$-tuple $\mathcal A:= (A_1,\dots,A_m)$ of
Hermitian matrices $A_1, \ldots, A_m$ defined as 
\begin{equation}\label{eq:jointnr2}
W(\mathcal A) = \Big\{\,y  \in \R^m \mid 
y = \rqs(x),\, x \in\mathbb C^m,\, \|x\|=1 \,\Big\}.
\end{equation}
By definition, $W(\mathcal A)$ is the range of 
the vector-valued function $\rqs$ over the unit sphere 
$\{x\in\C^n \mid \|x\|=1\}$.
Since $\rqs$ is a continuous and bounded function, $W(\mathcal A)$ is a connected 
and bounded subset of $\R^m$.
Moreover, it is known that the set of $W(\mathcal A)$ is convex in cases
such as $m=2$ for any $n$, $m=3$ for $n\geq 3$
(see~\cite{Au:1983,Au:1984}),
and other cases under certain conditions~\cite{Li:2000}.

Let $\Omega$ be a bounded and closed subset of $\R^m$, 
and let $v\in\R^m$ be a nonzero vector. Then for a vector 
\begin{equation}\label{eq:constc}
y_{v} \in \argmax_{y\in\Omega} v^Ty,
\end{equation}
the set
\begin{equation}\label{eq:suppomega}
\Big\{\, y\in \R^m \mid  v^T(y-y_{v}) = 0\,\Big\} 
\end{equation}
defines a \emph{supporting hyperplane} of $\Omega$ with 
an outer normal vector $v$ and a supporting point $y_{v}$.
In other words, 
the hyperplane~\eqref{eq:suppomega} contains $\Omega$ in one of its
half-space, and also contains a point $y_{v} \in \Omega$: 
\begin{equation}\label{eq:supp0}
(i)\quad  \mbox{$v^T y\leq  v^Ty_{v}$ for all $y\in \Omega$}
\quad \mbox{and} \quad 
(ii)\quad  \mbox{$y_{v} \in  \Omega$}.
\end{equation}

The following lemma shows that 
if the set $\Omega=W(\mathcal  A)$,
then the optimization~\eqref{eq:constc}, namely the supporting point $y_{v}$,
can be found by solving an Hermitian eigenvalue problem.

\begin{lemma}\label{lem:support}
Let $v\in\R^m$ be a nonzero vector. Then
\begin{equation}\label{eq:ylo}
y_{v} \in  \argmax_{y\in W(\mathcal A)}\ v^Ty
\end{equation}
if and only if 
\begin{equation}\label{eq:supportv} 
y_{v} = \rqs(x_v),
\end{equation}
where $x_v$ is an eigenvector corresponding to
the largest eigenvalue $\lambda_v$ of the Hermitian matrix 
\begin{equation} \label{eq:Hvdef} 
H_v :=\sum_{i=1}^m v(i) \cdot A_i,
\end{equation} 
where $v(i)$ is the $i$-th entry of $v$. 
\end{lemma}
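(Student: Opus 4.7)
The plan is to reduce the supporting-hyperplane optimization over the joint numerical range $W(\mathcal A)$ to a standard Rayleigh-quotient maximization for the Hermitian matrix $H_v$, and then invoke the classical variational characterization of the largest eigenvalue.

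The key identity I would establish first is that for any unit vector $x\in\C^n$ and $y=\rqs(x)\in W(\mathcal A)$,
\begin{equation*}
v^T y \;=\; \sum_{i=1}^m v(i)\, x^H A_i x \;=\; x^H\!\Big(\sum_{i=1}^m v(i)\, A_i\Big) x \;=\; x^H H_v x.
\end{equation*}
Since $W(\mathcal A)$ is precisely the image of the unit sphere under $\rqs$, this gives
\begin{equation*}
\max_{y\in W(\mathcal A)} v^T y \;=\; \max_{\|x\|=1} x^H H_v x \;=\; \lambda_v,
\end{equation*}
the rightmost equality being the Rayleigh--Ritz characterization of the largest eigenvalue of the Hermitian matrix $H_v$.

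For the two directions: if $y_v\in\argmax_{y\in W(\mathcal A)} v^T y$, then by definition of $W(\mathcal A)$ there exists a unit $x_v\in\C^n$ with $y_v=\rqs(x_v)$, and by the identity above $x_v^H H_v x_v = v^T y_v = \lambda_v$. The standard fact that a unit vector attains the Rayleigh quotient maximum of a Hermitian matrix if and only if it is an eigenvector for the largest eigenvalue then forces $H_v x_v = \lambda_v x_v$. Conversely, if $x_v$ is a unit eigenvector of $H_v$ for $\lambda_v$ and $y_v := \rqs(x_v)$, then $v^T y_v = x_v^H H_v x_v = \lambda_v$, which equals $\max_{y\in W(\mathcal A)} v^T y$; hence $y_v$ lies in the argmax.

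I do not anticipate a serious obstacle: the lemma is essentially a translation lemma between the two equivalent descriptions of the same linear functional (as $v^T y$ on $W(\mathcal A)$ versus $x^H H_v x$ on the unit sphere). The only point to handle carefully is that in the forward direction one must produce \emph{some} unit $x_v$ with $\rqs(x_v)=y_v$ (any preimage in the unit sphere works, guaranteed by $y_v\in W(\mathcal A)$), and then verify that any such preimage automatically satisfies the eigenvector equation because it attains the Rayleigh quotient maximum.
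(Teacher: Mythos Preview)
Your proposal is correct and follows essentially the same approach as the paper: establish the identity $v^T\rqs(x) = x^H H_v x$ and reduce the argmax over $W(\mathcal A)$ to the Rayleigh-quotient characterization of $\lambda_{\max}(H_v)$. The paper's proof is slightly terser on the two directions of the ``if and only if,'' but your more explicit treatment of the forward implication (any preimage of a maximizer must attain the Rayleigh maximum and hence be an eigenvector) is a welcome clarification rather than a departure.
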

\begin{proof}
	Observe that
	\begin{equation}\label{eq:vtrho}
		v^T \rqs(x) = \sum_{i=1}^m (x^HA_ix) \cdot v(i) = x^HH_vx.
	\end{equation}
	The maximization from~\eqref{eq:ylo} leads to
	\begin{align*}
		\max_{y\in W(\mathcal A)} v^T  y 
		& = \max_{\|x\|=1} v^T  \rqs(x)
		= \max_{\|x\|=1} x^HH_vx  = x_{v}^H H_v x_{v} = v^T\rqs(x_{v}),
	\end{align*}
where the second and the last equalities are due to~\eqref{eq:vtrho},
and the third equality is by the eigenvalue maximization
	principle of Hermitian matrices,
	indicating that the maximizer of $x^HH_vx$
	is achieved at any eigenvector $x_v$ corresponding to
	the largest eigenvalue of $H_v$.
\end{proof}

\Cref{lem:support} suggests a close relation between the SCF
iteration~\eqref{eq:scf} and the search for 
supporting points of $W(\mathcal A)$. Such relation 
is called a geometric interpretation of the SCF, and is formally 
stated in the following theorem.

\begin{theorem}\label{thm:geo}
Let $\{x_k\}$ be the sequence of unit vectors generated by the SCF
iteration~\eqref{eq:scf}, 
and $y_k := \rqs(x_k)$, where $\rqs$ is defined in~\eqref{eq:rho2}. 
Then it holds
\begin{equation}\label{eq:ykp1}
y_{k+1} \in \argmax_{y\in W(\mathcal A)}\ \nabla \phi(y_k)^Ty.
\end{equation}
Therefore, geometrically, 
\begin{equation}\label{eq:scfgeo}
\mbox{$y_{k+1}$ is a supporting point of $W(\mathcal A)$ 
	for the outer normal vector $\nabla \phi (y_k)$}.
\end{equation}
\end{theorem}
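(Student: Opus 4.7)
The plan is to reduce the statement to a direct application of \Cref{lem:support} by identifying the outer normal vector $\nabla \phi(y_k)$ with the coefficient vector defining $H(x_k)$. All the real work has been done in \Cref{lem:support}; what remains is a bookkeeping check linking the SCF iteration map to the supporting-point characterization.

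First, I would compute $\nabla \phi(y_k)$ componentwise. Since $\phi(y) = \sum_{i=1}^m \phi_i(y(i))$ and the anti-derivative relation $\phi'_i = h_i$ was built into the setup of the aMax~\eqref{eq:maxf}, the $i$-th entry of $\nabla \phi(y_k)$ is simply $h_i(y_k(i)) = h_i(x_k^H A_i x_k)$. Setting $v := \nabla \phi(y_k) \in \R^m$ (which is what \Cref{lem:support} wants as input), I would then compare the formula \eqref{eq:Hvdef} for $H_v$ with the definition \eqref{eq:hx} of $H(x_k)$ and conclude
\begin{equation*}
H_v \;=\; \sum_{i=1}^m v(i)\,A_i \;=\; \sum_{i=1}^m h_i(x_k^H A_i x_k)\,A_i \;=\; H(x_k).
\end{equation*}

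Second, I would invoke \Cref{lem:support} with this $v$. By the SCF recurrence~\eqref{eq:scf}, $x_{k+1}$ is a unit eigenvector of $H(x_k) = H_v$ corresponding to its largest eigenvalue, hence $y_{k+1} = \rqs(x_{k+1})$ is precisely a point of the form \eqref{eq:supportv}. The ``if and only if'' in \Cref{lem:support} then gives $y_{k+1} \in \argmax_{y\in W(\mathcal A)} v^T y$, which is \eqref{eq:ykp1}. Finally, the geometric conclusion~\eqref{eq:scfgeo} is immediate: by the definition \eqref{eq:constc}--\eqref{eq:supp0}, any maximizer of $v^T y$ over the closed bounded set $W(\mathcal A)$ is, by construction, a supporting point with outer normal $v = \nabla \phi(y_k)$.

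There is no genuine obstacle here; the only subtle point to mention briefly is that $W(\mathcal A)$ is bounded and closed (hence the maximum in \eqref{eq:ykp1} is attained and the notion of supporting hyperplane is well defined), which has already been recorded right after \eqref{eq:jointnr2}. So the proof is essentially a one-line identification $\nabla \phi(y_k) \mapsto H(x_k)$ followed by a citation of \Cref{lem:support}.
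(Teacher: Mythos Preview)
Your proposal is correct and follows essentially the same approach as the paper: identify $H(x_k)$ with $H_{v_k}$ for $v_k=\nabla\phi(y_k)$ via the relation $\phi_i'=h_i$, then invoke \Cref{lem:support} to conclude that $y_{k+1}=\rqs(x_{k+1})$ is a supporting point for the outer normal $\nabla\phi(y_k)$. The only difference is cosmetic---you spell out the componentwise gradient computation and the boundedness/closedness of $W(\mathcal A)$ a bit more explicitly than the paper does.
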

\begin{proof}
By the definition~\eqref{eq:hx},
the coefficient matrix $H(x_k)$ is an $H_v$ matrix in~\Cref{lem:support}:
\begin{equation}\label{eq:supph}
	H(x_k) \equiv H_{v_k}
	\quad\text{\quad with $v_k = \nabla \phi(y_k)$
	and  $y_k=\rqs(x_k)\in W(\mathcal A)$}.
\end{equation}
Hence, 
the $k$-th SCF iteration~\eqref{eq:scf}
is to solve the Hermitian eigenproblem
$H_{v_k} x_{k+1} =\lambda_{k+1}x_{k+1}$.
It follows from~\Cref{lem:support} 
that $y_{k+1} = \rqs(x_{k+1}) $ is a solution 
to~\eqref{eq:ylo} for $v_k=\nabla \phi(y_k)$.
Therefore, $y_{k+1}$ is a supporting point of $W(\mathcal A)$ 
for the outer normal direction $\nabla \phi(y_k)$.
\end{proof}

At a solution $x_*$ of the mNEPv~\eqref{eq:nepv}, 
the geometric interpretation~\eqref{eq:scfgeo} is equivalent to
the following geometric first-order optimality condition 
for the constrained optimization~\eqref{eq:maxphi}: 
\begin{equation}\label{eq:geonepv}
\nabla \phi(y_*)  \text{ is an outer normal vector of $W(\mathcal A)$ at $y_*$},
\end{equation}
where $y_* = \rqs(x_*)$.
In fact, the statement~\eqref{eq:geonepv} can be obtained 
via the geometric viewpoint of the optimality condition 
for a general constrained optimization,
described by the \emph{normal cone} of the feasible set; 
see, e.g.,~\cite[Sec. 12.7]{Nocedal:2006}.

By~\Cref{thm:geo}, the SCF iteration~\eqref{eq:scf} can be visualized
as searching the solution of the mNEPv~\eqref{eq:nepv}
on the boundary of the joint numerical range $W(\mathcal{A})$. 
For illustration, let us consider the mNEPv~\eqref{eq:nepv} of the form
\begin{equation}\label{eq:hxqrt0}
H(x)x=\lambda x 
\quad\text{with}\quad
H(x) = (x^HA_1x)\cdot A_1 +  (x^HA_2x)\cdot A_2,
\end{equation}
where $A_1$ and $A_2$ are Hermitian matrices.
The mNEPv~\eqref{eq:hxqrt0} arises from numerical radius computation
and will be further discussed in \Cref{eg:quatic}. 
By~\Cref{thm:vc} and~\eqref{eq:maxphi},
the mNEPv~\eqref{eq:hxqrt0} can be 
characterized by the following optimization problems:
\begin{equation}\label{eq:numrdmax0}
\max_{x\in\C^n, \|x\|=1} 
\left\{ \objf(x) := \frac{1}{2}\left( (x^HA_1x)^2+ (x^HA_2x)^2 \right) \right\} 
=
\max_{y\in W(A_1,A_2)}
\left\{\, \phi(y) := \frac{1}{2}\|y\|^2\, \right\},
\end{equation}
where $W(A_1,A_2)$ is a joint numerical range of $A_1$ and $A_2$. 
\Cref{fig:geo} depicts the SCF as a search process 
for solving the mNEPv~\eqref{eq:hxqrt0} with random Hermitian 
matrices $A_1$ and $A_2$ of order 10. Given the initial $y_0=\rqs(x_0)$,
the SCF first searches in the gradient direction $v_0=\nabla \phi(y_0)$ 
to obtain a supporting point $y_1=\rqs(x_1)$;
it then searches in the gradient direction $\nabla \phi(y_1)$ to  obtain
the second supporting point $y_2=\rqs(x_2)$; and so on. 
When this process converges to $y_*=\rqs(x_*)$, the gradient $\nabla \phi(y_*)$ 
overlaps the outer normal vector for $W(\mathcal A)$ at $y_*$,
i.e., the geometric optimality condition~\eqref{eq:geonepv} is achieved.

\begin{figure}[t]
\centering\includegraphics[width=0.5\textwidth]{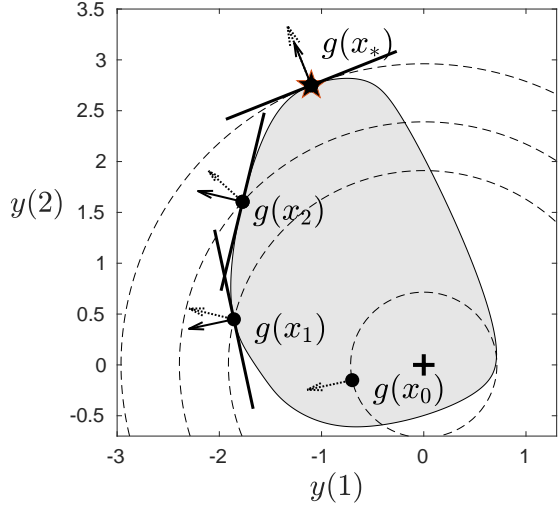}
\caption{ 
Illustration of the first three iterates $x_0,x_1,x_2$ 
by the SCF~\eqref{eq:scf} for the mNEPv~\eqref{eq:hxqrt0}: 
the dark region is the joint numerical range $W(A_1,A_2)$;
dashed lines represent contour of $\phi(y)=\|y\|^2/2$ 
with dashed arrows the gradient directions $\nabla \phi$;
solid tangent lines are `supporting hyperplanes'
at $y_i=\rqs(x_i)$; the maximizer of~\eqref{eq:numrdmax0} is marked as 
$\text{\FiveStar}$.
}\label{fig:geo}
\end{figure}

Let us discuss two direct implications of~\Cref{thm:geo}. 
Firstly, equation~\eqref{eq:ykp1}
indicates that the SCF is a \emph{successive local linearization} for the
optimization~\eqref{eq:maxphi}:
At iteration $k$, we approximate the function
$\phi(y)$ by a first-order expansion 
\begin{equation}\label{eq:ellky}
\ell_k(y):=\phi(y_k) + \nabla\phi(y_k)^T(y-y_k)
\end{equation}
and then solve the optimization of the linear function over the joint
numerical range as
\begin{equation}\label{eq:locallin}
\max_{y\in W(\mathcal A)} \ell_k(y).
\end{equation}
By dropping the constant terms in $\ell_k(y)$, the maximizers
of~\eqref{eq:locallin} satisfies
\[
\argmax_{y\in W(\mathcal A)}\ \ell_k(y) \equiv \argmax_{y\in W(\mathcal A)}\
 \nabla\phi(y_k)^Ty.
\]
Hence, the solution
to~\eqref{eq:locallin}
is exactly $y_{k+1}$ in~\eqref{eq:ykp1}, and we have
\begin{equation}\label{eq:lkp1}
\ell_k(y_{k+1}) = \max_{y\in W(\mathcal A)} \ell_k(y).
\end{equation}
These observations are helpful to the proof of the global convergence of
the SCF 
as to be presented in \Cref{sec:scfconv}. 
Secondly, it is well-known that 
a closed convex region $\Omega$ is the intersection of all of its 
supporting halfspaces.
One can use intersections of sampled supporting halfspaces (i.e., a polytope)
to approximate $\Omega$ from outside; see, e.g.,~\cite[Sec.~11]{Rockafellar:1970}.
Such schemes,  known as \emph{outer approximation},
are commonly used for finding global optimizers of
convex maximization over a convex region; see, e.g.,~\cite{Benson:1995}
for a general description and~\cite{Uhlig:2009} for 
a geometric computation of numerical radius.
Since the SCF generates a sequence of supporting hyperplanes of
$W(\mathcal A)$, 
those hyperplanes also produce outer approximations of
$W(\mathcal A)$, 
which allows to combine the SCF with outer approximation schemes 
for the global optimization of~\eqref{eq:maxphi}. Detailed discussion of 
such approach is beyond the scope of this paper.


\subsection{Convergence analysis of the SCF}\label{sec:scfconv}
In this section, we show that the SCF iteration is 
globally convergent to an eigenvector of the mNEPv~\eqref{eq:nepv}, 
as indicated by the visualization of the SCF in~\Cref{sec:geo}.
Moreover, the converged eigenvector is typically a stable one and
the rate of convergence is at least linear.

Let $\{x_k\}$ be a sequence of unit vectors.
We call $x_*$ an (entry-wise) limit point of $\{x_k\}$ if
\begin{equation}\label{eq:liminting}
\text{ 
$x_* = \lim_{j\to\infty} x_{k_j}$ for some subsequence $\{x_{k_j}\}$
indexed by $k_1 < k_2<\cdots$.
}
\end{equation}
By the well-known Bolzano–Weierstrass theorem,
a bounded sequence in $\C^n$ always has a convergent subsequence. 
So the sequence $\{x_k\}$ of unit vectors 
has at least one limit point $x_*$.

The following theorem shows the global convergence of 
the SCF iteration~\eqref{eq:scf} .

\begin{theorem}\label{thm:mono}
Let $\{x_k\}$ be the sequence of unit vectors generated by 
the SCF~\eqref{eq:scf} for the mNEPv~\eqref{eq:nepv}, 
and let $\objf(x)$ be the 
objective function of the corresponding aMax~\eqref{eq:maxf}.  Then, 
\begin{enumerate}[(a)]
\item\label{thm:mono:mono}
$\objf(x_{k+1})\geq \objf(x_k)$ for $k=0,1,\dots$,
with equality holds only if $x_k$ 
is an eigenvector of the mNEPv~\eqref{eq:nepv}; 

\item\label{thm:mono:nepv} 
each (entry-wise) limit point $x_*$ of $\{x_k\}$ 
must be an eigenvector of the mNEPv~\eqref{eq:nepv},
and it holds $\objf(x_*) \geq \objf(x_k)$ for all $k\geq 0$.
\end{enumerate}
\end{theorem}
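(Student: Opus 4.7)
The plan for part (a) is to combine the geometric characterization in \Cref{thm:geo} with the convexity of $\phi$. Since each $h_i=\phi_i'$ is non-decreasing, every $\phi_i$ (hence $\phi$) is convex, which gives the global tangent inequality $\phi(y)\geq \ell_k(y)$ for all $y\in\R^m$, where $\ell_k$ is the linearization in~\eqref{eq:ellky} at $y_k:=\rqs(x_k)$. Setting $y_{k+1}:=\rqs(x_{k+1})$ and invoking~\eqref{eq:lkp1} I would then chain
\begin{equation*}
\objf(x_{k+1})=\phi(y_{k+1})\geq \ell_k(y_{k+1})\geq \ell_k(y_k)=\phi(y_k)=\objf(x_k),
\end{equation*}
which is the claimed monotonicity. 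If equality holds, the chain collapses to $\ell_k(y_{k+1})=\ell_k(y_k)$, so $y_k$ itself maximizes $\nabla\phi(y_k)^T y$ over $W(\mathcal A)$. Combining~\eqref{eq:supph} with~\eqref{eq:vtrho}, this maximum equals $\lambda_{\max}(H(x_k))$ and its value at $y_k$ is exactly $x_k^H H(x_k)x_k$; hence $x_k^H H(x_k)x_k=\lambda_{\max}(H(x_k))$, which forces $x_k$ to be a unit top eigenvector of $H(x_k)$, i.e., an eigenvector of the mNEPv~\eqref{eq:nepv}.

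For part (b), the sequence $\{\objf(x_k)\}$ is non-decreasing by part (a) and bounded above (because $\objf$ is continuous on the compact unit sphere), so it converges to some $\objf_*$. For any limit point $x_*$ with $x_{k_j}\to x_*$, continuity gives $\objf(x_*)=\objf_*\geq \objf(x_k)$ for every $k$. To show $x_*$ satisfies the mNEPv, I would extract a further subsequence along which $x_{k_j+1}\to x_*'$ (possible by Bolzano--Weierstrass on the unit sphere) and pass to the limit in $H(x_{k_j})x_{k_j+1}=\lambda_{k_j+1}x_{k_j+1}$: continuity of $H(\cdot)$ together with continuity of $\lambda_{\max}$ on Hermitian matrices yields $H(x_*)x_*'=\lambda_{\max}(H(x_*))\,x_*'$, so $x_*'$ is a unit top eigenvector of $H(x_*)$. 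By~\Cref{lem:support} applied with $v=\nabla\phi(y_*)$ where $y_*:=\rqs(x_*)$, the point $y_*':=\rqs(x_*')$ lies in $\argmax_{y\in W(\mathcal A)}\nabla\phi(y_*)^T y$, while continuity also gives $\objf(x_*')=\objf_*=\objf(x_*)$. Rerunning the inequality chain from part (a) at $x_*$ with $x_*'$ in place of the next SCF iterate, the chain must again collapse, and the equality-case reasoning of part (a) then forces $x_*$ to be an eigenvector of the mNEPv~\eqref{eq:nepv}.

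The main obstacle I anticipate is that the SCF map $\Pi$ need not be continuous where the largest eigenvalue of $H(\cdot)$ is non-simple, so $x_{k_j+1}$ cannot in general be identified with any canonical ``SCF image'' of $x_*$. The remedy is the subsequence extraction used above, which only relies on continuity of $H(\cdot)$ and of the scalar function $\lambda_{\max}$; the resulting $x_*'$ is merely \emph{some} unit top eigenvector of $H(x_*)$, and this is already enough to invoke \Cref{lem:support} and recycle the equality-case argument from part (a), without ever having to pin $x_*'$ down as a specific value of $\Pi(x_*)$.
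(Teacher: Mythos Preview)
Your proof of part~(a) is essentially identical to the paper's: both use the convexity inequality $\phi\geq \ell_k$ together with~\eqref{eq:lkp1} to get the chain, and both reduce the equality case to $y_k\in\argmax_{y\in W(\mathcal A)}\nabla\phi(y_k)^Ty$. Your phrasing via the Rayleigh-quotient identity $x_k^H H(x_k)x_k=\lambda_{\max}(H(x_k))$ is equivalent to the paper's appeal to \Cref{lem:support}.

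For part~(b) your argument is correct but genuinely different from the paper's. The paper never looks at the next iterates $x_{k_j+1}$; instead it works purely with the linearizations, shows $\ell_{k_j}\to\ell_*$ pointwise, and then argues by contradiction that $\nabla\phi(y_*)^T(y-y_*)\leq 0$ for all $y\in W(\mathcal A)$ (otherwise $\phi(y_{k_j+1})\geq \max_y\ell_{k_j}(y)$ would eventually exceed $\phi(y_*)+\varepsilon/2$, violating monotonicity). From this it gets $\ell_*(y_*)=\max_y\ell_*(y)$ directly. Your route --- extract a further subsequence so that $x_{k_j+1}\to x_*'$, pass to the limit in the eigenvalue equation using continuity of $H(\cdot)$ and of $\lambda_{\max}$, and then recycle the equality-case argument of part~(a) with the pair $(x_*,x_*')$ --- is equally valid and arguably more concrete, since it exhibits an explicit top eigenvector $x_*'$ of $H(x_*)$ witnessing the collapse. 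The paper's approach has the advantage of avoiding any discussion of limits of eigenvectors (and hence sidesteps the non-continuity issue you flagged), while yours buys a cleaner reuse of part~(a) at the cost of one extra compactness extraction.
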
 
\begin{proof} 
For item~\eqref{thm:mono:mono}, the monotonicity 
$\objf(x_{k+1}) \geq \objf(x_k)$  
is a direct consequence of the convexity of $\phi$
and~\eqref{eq:lkp1}.
Recalling~\eqref{eq:ellky} that the linearization $\ell_k$
of the convex function $\phi$
is always a lower supporting function of $\phi$,  i.e., 
\[
	\text{$\ell_k(y) \leq \phi(y)$ for all $y\in W(\mathcal A)$,}
\]
we have 
\begin{equation}\label{eq:phikp1}
\objf(x_{k+1})\equiv \phi(y_{k+1}) \geq \ell_k(y_{k+1}) = \max_{y\in W(\mathcal A)}
\ell_k(y) \geq \ell_k(y_k) = \phi(y_k) \equiv \objf(x_k),
\end{equation}
where the third equality is by~\eqref{eq:lkp1}.
Moreover, if the equality $\objf(x_{k+1})=\objf(x_k)$ holds, 
then~\eqref{eq:phikp1} implies 
\begin{equation}\label{eq:yklk}
	\ell_k(y_k) = \max_{y\in W(\mathcal A)} \ell_k(y),
\end{equation}
namely,
\[
	y_{k}\in
	\argmax_{y\in W(\mathcal A)} \ell_k(y)
	\equiv 
	\argmax_{y\in W(\mathcal A)} \nabla\phi(y_k)^Ty.
\]
According to~\Cref{lem:support},
$y_k = \rqs(x_k)$ and $x_k$ is an eigenvector for the largest eigenvalue
of $H_{v_k}$ with $v_k=\nabla \phi(y_k)$.
Since $H_{v_k}\equiv H(x_k)$, we have 
$H(x_k)x_k = \lambda x_k$ and $\lambda$ is the largest eigenvalue,
i.e., $x_k$ is an eigenvector of the mNEPv~\eqref{eq:nepv}.

For item~\eqref{thm:mono:nepv}, 
let $\{x_{k_j}\}$ be a subsequence of $\{x_k\}$ convergent to $x_*$.
The monotonicity of $\objf(x_{k+1})\geq \objf(x_k)$ from item~\eqref{thm:mono:mono} implies 
$\objf(x_*) \geq \objf(x_k)$ for all $k\geq 0$.

To show $x_*$ is an eigenvector, we denote by $y_{k_j}= \rqs(x_{k_j})$
and $y_{*}= \rqs(x_{*})$. The linearization of $\phi$ at $y_*$ satisfies 
\begin{equation}\label{eq:convl}
\ell_*(y):=\phi(y_*) + \nabla\phi(y_*)^T(y-y_*)
= \lim_{j\to \infty} \ell_{k_j} (y),
\end{equation}
where the last equality is due to $y_*=\lim_{j\to\infty} y_{k_j}$
and  the continuity of $\phi$ and $\nabla\phi$
(recall $\ell_k$ from~\eqref{eq:yklk}).

We first show that 
\begin{equation}\label{eq:nabphi}
\nabla\phi(y_*)^T(y-y_*) \leq 0 
\quad \text{for all $y\in W(\mathcal A)$}.
\end{equation}
Otherwise, there exists a $\widetilde y\in W(\mathcal A)$ with
\begin{equation}\label{eq:vareps}
	\varepsilon := \nabla\phi(y_*)^T(\widetilde y-y_*) > 0.
\end{equation}
Due to the convergence of $\ell_{k_j}\to \ell_*$ in~\eqref{eq:convl},
there exists $N\geq 0$ such that for all $j\geq N$,
\begin{equation}\label{eq:lkj}
\ell_{k_j}(\widetilde y)  \geq \ell_*(\widetilde y) -
{\varepsilon}/{2}.
\end{equation}
It then follows from~\eqref{eq:phikp1} (with $k=k_{j}$) that
for all $j\geq N$,
\[
	\phi(y_{k_j+1}) \geq \max_{y\in W(\mathcal A)} \ell_{k_j}(y)
	\ \geq \
	\ell_{k_j}(\widetilde y)\ \geq\ \ell_*(\widetilde y) - \frac{\varepsilon}{2}
	= \phi(y_*) + \frac{\varepsilon}{2},
\]
where the last two equations are due to~\eqref{eq:lkj}
and~\eqref{eq:vareps}. 
This implies $\objf(x_{k_j+1}) \geq \objf(x_*) + \varepsilon/2$,
contracting $\objf(x_*)\geq \objf(x_k)$ for all $k$.

It follows from~\eqref{eq:convl} and~\eqref{eq:nabphi} that
\[
\ell_*(y_*) =  \max_{y\in W(\mathcal A)} \ell_*(y) =  \phi(y_*).
\]
By the same arguments as for the $y_k$ in~\eqref{eq:yklk},
we have $x_*$ is an eigenvector of the mNEPv~\eqref{eq:nepv}.
\end{proof}

In~\Cref{sec:nepvegs},
we will discuss the mNEPv~\eqref{eq:nepv} 
arising from optimization in the form of the aMax~\eqref{eq:maxf}.
Then the monotonicity of the objective function 
is highly desirable. 
Starting from any $x_0$, the SCF will find an eigenvector $x_*$ that has a 
increased function value $\objf(x_*) \geq \objf(x_0)$.

\Cref{thm:mono} guarantees the global convergence of the SCF to an eigenvector
$x_*$ of the mNEPv~\eqref{eq:nepv}. 
It may happen that $x_*$ is a non-stable eigenvector.
For example, if the initial $x_0$ is itself a non-stable eigenvector,
or if the SCF unluckily jumps to an exact non-stable eigenvector during the
iteration, then the iteration stagnates at that eigenvector.
However except those special situations, 
the convergence to a non-stable eigenvector rarely happens in practice.
This is because the SCF~\eqref{eq:scf} is a fixed-point iteration
of the mapping $\Pi$~\eqref{eq:pi}. 
$\Pi$ is non-contractive at non-stable eigenvectors; see~\Cref{sec:stability}. 
By the local convergence analysis of the SCF 
for a general unitarily invariant NEPv (see~\cite[Theorem~1]{Bai:2022}), 
we can draw the local convergence of the SCF iteration~\eqref{eq:scf} 
for the mNEPv~\eqref{eq:nepv} as stated in the following theorem.

\begin{theorem}\label{thm:bailocal}
Let $x_*$ be an eigenvector of the mNEPv~\eqref{eq:nepv}
corresponding to a simple eigenvalue $\lambda_*$,
$\mathcal L$ be the $\mathbb R$-linear operator~\eqref{eq:lop1}
with respect to $x_*$, and $\spr(\mathcal L)$ be the spectral radius 
of $\mathcal{L}$.
\begin{enumerate}[(a)]
\item\label{thm:bailocal:conv}
If $\spr(\mathcal L) <1$ 
(i.e., $x_*$ is a stable eigenvector by~\Cref{def:regular2}),
then the SCF~\eqref{eq:scf} is locally convergent to
$x_*$, with an asymptotic convergence rate bounded by 
$\spr(\mathcal L)$.

\item\label{thm:bailocal:div}
If $\spr(\mathcal L) >1$ 
(i.e., $x_*$ is a non-stable eigenvector by~\Cref{def:regular2}),
then the SCF is locally divergent from $x_*$.
\end{enumerate}
\end{theorem}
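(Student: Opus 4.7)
The plan is to reduce the two claims to the general local convergence result for the SCF on unitarily invariant NEPv stated in~\cite[Thm.~1]{Bai:2022}, which is the companion of the contractivity characterization~\cite[Thm.~4.2]{Bai:2022} already invoked in~\Cref{sec:stability}. The first step is to verify that the mNEPv~\eqref{eq:nepv} fits the framework of that reference: the coefficient matrix $H(x)$ in~\eqref{eq:hx} is Hermitian, continuously differentiable in the real and imaginary parts of $x$, and unitarily scaling invariant. Since $\lambda_*$ is a simple eigenvalue, the fixed-point map $\Pi$ in~\eqref{eq:pi} is well-defined and smooth on a neighborhood of $[x_*]$ in the complex projective space $\mathbb C\mathbb P^{n-1}$, and its derivative at $[x_*]$, expressed in the chart given by the orthogonal complement $X_{*\bot}$, is exactly the $\mathbb R$-linear operator $\mathcal L$ of~\eqref{eq:lop1}. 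This identification is carried out in~\cite{Bai:2022} and can be quoted directly.

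Once this identification is in hand, part~\eqref{thm:bailocal:conv} follows from a standard contraction argument on the projective space: the condition $\spr(\mathcal L)<1$ together with the self-adjointness of $\mathcal L$ (\Cref{lem:regular}) implies that for any sufficiently small $\varepsilon>0$ there is a norm on the tangent space $T_{[x_*]}\mathbb C\mathbb P^{n-1}$ in which $\Pi$ is Lipschitz with constant $\spr(\mathcal L)+\varepsilon<1$. A Banach-type fixed-point argument then yields local convergence of $\{[x_k]\}$ to $[x_*]$, and iterating the Lipschitz bound produces the asymptotic rate bound $\spr(\mathcal L)$. Part~\eqref{thm:bailocal:div} proceeds by a complementary instability argument: when $\spr(\mathcal L)>1$, one picks a unit eigenvector $z$ of $\mathcal L$ with eigenvalue $\mu>1$; any initial perturbation with a non-zero component along $z$ is amplified by approximately $\mu^k$ under the linearization of $\Pi$, and the nonlinear remainder (of order $\lo(\|\cdot\|)$) cannot suppress this amplification in a sufficiently small neighborhood of $[x_*]$, so the iterates cannot remain near $[x_*]$.

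The main technical obstacle, already dispatched in~\cite{Bai:2022}, is the passage between the unit-vector iteration~\eqref{eq:scf} on $\{x\in\C^n:\|x\|=1\}$ and the genuine fixed-point iteration on the quotient $\mathbb C\mathbb P^{n-1}$: the iterate $x_{k+1}$ is determined only up to a unimodular scalar, so representatives must be chosen consistently to make the linearization well-defined, and consequently $\mathcal L$ is only $\mathbb R$-linear rather than $\mathbb C$-linear. Fortunately, the structural facts established earlier in the paper---in particular~\Cref{lem:regular}, which guarantees that $\mathcal L$ is self-adjoint and positive semi-definite with respect to the inner product $\langle\cdot,\cdot\rangle_{D_*}$, so that $\spr(\mathcal L)=\lambda_{\max}(\mathcal L)$ is attained at a genuine eigenvector---allow~\cite[Thm.~1]{Bai:2022} to specialize cleanly to our setting. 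Accordingly, the proof in the paper can be a short invocation of that theorem, together with the observation that the hypotheses~\eqref{thm:bailocal:conv} and~\eqref{thm:bailocal:div} here match the spectral-radius conditions there via~\Cref{def:regular2}.
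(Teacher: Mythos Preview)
Your proposal is correct and matches the paper's approach exactly: the paper does not include a formal proof environment for this theorem at all, but simply states in the text preceding the theorem that it follows from \cite[Theorem~1]{Bai:2022} for general unitarily invariant NEPv. Your anticipated ``short invocation'' is precisely what the paper does---indeed, the paper gives even less detail than your proposal, omitting the verification of hypotheses and the contraction/instability sketches you outline.
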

Here we recall that an iterate $x_{k}$ by the SCF~\eqref{eq:scf} 
is understood as an one-dimensional subspace spanned by $x_{k}$. 
The local convergence and divergence of $x_k$ in~\Cref{thm:bailocal} 
is measured by the vector angle 
$\angle(x_*,x_k):= \cos^{-1}\left(|x_*^Hx_k|\right)$.

\begin{figure}[t]
\centering\includegraphics[width=0.5\textwidth]{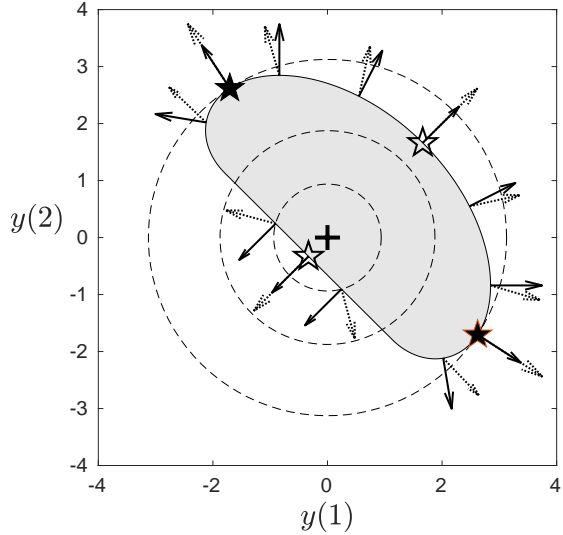}
\caption{Illustration of stable eigenvectors, marked as solid stars
$\text{\FiveStar}$,
and non-stable eigenvectors, marked as hollow stars $\text{\FiveStarOpen}$:
Close to a non-stable eigenvector, the gradients $\nabla\phi$ (dashed arrows)
point away from the normal vectors (solid arrow),
leading to divergence of the SCF from $\text{\FiveStarOpen}$.
}\label{fig:geo2}
\end{figure}

By the geometric interpretation of the SCF from \Cref{thm:geo}, 
we can visually illustrate its local convergence behavior 
revealed in~\Cref{thm:bailocal}.
\Cref{fig:geo2} depicts the search directions of the SCF
for the numerical radius problem described
in~\eqref{eq:numrdmax0}, with the corresponding mNEPv~\eqref{eq:hxqrt0}. 
There are four eigenvectors (marked as stars, where the solid
and dashed arrows overlap). Two solid stars are 
stable eigenvectors (i.e., local maximizers of~\eqref{eq:numrdmax0})
and two hollow stars are non-stable eigenvectors (non-maximizers). 
The reason why the SCF is locally convergent to stable eigenvectors
is now clear: Close to a solid star,
the search directions $\nabla \phi(y)$ by~\eqref{eq:scfgeo} (dashed arrow) 
leads the next iteration closer to the solid star.
In contrast, close to a hollow star, 
the search directions lead away from the hollow star.
This observation also justifies the name of \emph{non-stable
eigenvector}, since a slight perturbation 
will lead the SCF to diverge from those solutions.

Combining the properties of global convergence in \Cref{thm:mono}
and the local convergence in \Cref{thm:bailocal},
we can summarize the overall convergence behavior of the
SCF~\eqref{eq:scf} as follows:
\begin{enumerate} 
\item 
Let $x_*$ be an (entry-wise)
limit point of $\{x_k\}$ by the SCF. 
Then $x_*$ is an eigenvector of the mNEPv~\eqref{eq:nepv};
see~\Cref{thm:mono}\eqref{thm:mono:nepv}. 

\item 
The limit point $x_*$ is \emph{unlikely} a non-stable eigenvector,
since the SCF is locally divergent from non-stable eigenvectors;
see~\Cref{thm:bailocal}\eqref{thm:bailocal:div}.\footnote{One 
exceptional but rare case is that some $x_k$ coincides with a
non-stable eigenvector and SCF stops at $x_k$.}
Consequently, the SCF is expected to converge to (at least) 
a weakly stable eigenvector $x_*$ of the mNEPv~\eqref{eq:nepv}.

\item 
If the limit point $x_*$ is a stable eigenvector, then 
the SCF is at least locally linearly convergent to $x_*$;
see~\Cref{thm:bailocal}\eqref{thm:bailocal:conv}.
\end{enumerate}

\subsection{Accelerated SCF}\label{sec:impl}
The iterative process~\eqref{eq:scf} is an SCF in its simplest form, 
also known as the plain and pure SCF.
There are a number of ways to accelerate the plain SCF,
such as the damping scheme~\cite{Cances:2000b},
level-shifting~\cite{Saunders:1973,Thogersen:2004,Yang:2007},
direct inversion of iterative subspace (DIIS) 
with Anderson acceleration~\cite{Pulay:1980,Pulay:1982}, 
and preconditioned fixed-point iteration; see, e.g.,~\cite{Lin:2013}.
Most of these schemes are originally designed for solving NEPv 
from electronic structure calculations.  
In this section, we present an acceleration scheme 
of the SCF~\eqref{eq:scf} for the mNEPv~\eqref{eq:nepv}.

Inverse iterations are commonly applied for 
linear eigenvalue problems~\cite{Ipsen:1997} and
eigenvalue-dependent nonlinear eigenvalue problems (NEP)~\cite{Guttel:2017}.
There is an inverse iteration~\cite{Jarlebring:2014} for NEPv 
in the form of 
\begin{equation} \label{eq:nepv3} 
H(x/\|x\|)\cdot x=\lambda x, 
\end{equation} 
where $H(x)$ is
a real symmetric matrix that is differentiable in $x\in\R^n$~\cite{Jarlebring:2014}.\footnote{
	The authors in~\cite{Jarlebring:2014} considered scaling invariant NEPv 
	$H(x)\cdot x=\lambda x$ with $H(x)\equiv H(\alpha x)$ for all $\alpha\neq 0$,
	and they pointed out such NEPv cover~\eqref{eq:nepv3} as a special case.
}
For normalized $x$, we have $H(x/\|x\|)\equiv H(x)$, 
so that the mNEPv~\eqref{eq:nepv} can be equivalently 
written to an NEPv~\eqref{eq:nepv3}.
In the following, we will first revisit the inverse iteration scheme
in~\cite{Jarlebring:2014},
and then propose an improved scheme for solving 
the mNEPv~\eqref{eq:nepv} by exploiting its underlying structure. 

Let $x_k$ be a unit approximate eigenvector of 
the NEPv~\eqref{eq:nepv3}
and $\sigma_k$ be a chosen shift close to a target eigenvalue.
The following inversion step is proposed in~\cite{Jarlebring:2014} to
improve $x_k$:
\begin{equation}\label{eq:inverse}
\widetilde x_{k} = \alpha_k \left(J(x_k) - \sigma_k I\right)^{-1} x_k
\quad\text{with\quad $J(x) := \frac{\partial}{\partial x} (H(x/\|x\|)x)$},
\end{equation}
where $\alpha_k$ 
is a normalization factor.
In~\cite{Jarlebring:2014}, it is shown that 
the iteration~\eqref{eq:inverse} is closely related to Newton's method
for the nonlinear equations $H(x/\|x\|)x-\lambda x = 0$ and $x^Tx=1$.
Moreover, under mild assumptions, 
an inverse iteration that recursively applies~\eqref{eq:inverse}
converges linearly, 
with a convergence factor proportional to the
distance between the shift $\sigma_k$ and the target eigenvalue.
Furthermore, a quadratic convergence is expected with the Rayleigh shift 
$\sigma_k = x_k^T H(x_k)x_k$.

Directly applying the inverse iteration~\eqref{eq:inverse} 
to the mNEPv~\eqref{eq:nepv} will ignore the requirement that $\lambda$
is the largest eigenvalue of $H(x)$. Consequently, the process is prone to
convergence to an eigenvalue that is not the largest one. 
Nevertheless, the rapid quadratic convergence of the inverse iteration
with Rayleigh shifts is appealing.
We hence propose to only use the inversion step~\eqref{eq:inverse} 
as an acceleration for the SCF iteration~\eqref{eq:scf}.

We first note that despite the matrix $H(x)$ 
of the mNEPv~\eqref{eq:nepv} is symmetric 
when all coefficient matrices $A_1,\dots,A_m$ are real symmetric, 
the Jacobian~$J(x)$ in~\eqref{eq:jx} is generally not. 
Specifically, the Jacobian $J(x)$ of $H(x)$ is given by
\begin{align} \label{eq:jx} 
J(x) &\equiv \frac{\partial}{\partial x} (\,H(x/\|x\|)x\,) 
= H(x) + 2 M(x)C(x)M(x)^T P(x),
\end{align}
where 
$M(x)= [A_1x,\, \cdots,\, A_mx]$ and 
$C(x)=\mbox{Diag}\left(
h_1'\left( x^TA_1 x\right),\dots,h_m' \left(x^TA_m x\right)\right)$,
and $P(x) = I-xx^T$ is a projection matrix.
To symmetrize $J(x)$, we introduce 
\begin{equation}\label{eq:jsx}
J_{\rm s}(x):= H(x) + 2\, P(x)M(x)C(x)M(x)^T P(x),
\end{equation}
and reformulate the iteration step~\eqref{eq:inverse} to
\begin{equation}\label{eq:syminverse}
\widetilde x_{k} = 
\widetilde \alpha_k \cdot \left(J_{\rm s}(x_k) - \sigma_k I\right)^{-1} x_k,
\end{equation}
where $\widetilde \alpha_k$ normalizes $\widetilde x_k$ to a unit vector.
We observe that the iterations~\eqref{eq:inverse} and~\eqref{eq:syminverse}
are equivalent in the sense that replacing $J(x)$ by $J_s(x)$ 
in~\eqref{eq:inverse} will not affect the direction of $\widetilde x$. 
This is due to the fact that  
$$J_{\rm s}(x)  = J(x) + x\cdot q(x)^T,$$
where $q(x) = 2 P(x)M(x)C(x)M(x)^Tx\in\R^n$. 
Then by the Sherman–Morrison-Woodbury formula \cite{Horn:2012},
a quick algebraic manipulation shows that
\[
 \left(J_{\rm s}(x_k) - \sigma_k I\right)^{-1} x_k
 = 
 c\cdot \left(J(x_k) - \sigma_k I\right)^{-1} x_k
\]
for some some constant $c$.

If the coefficient matrices $A_1,\dots,A_m$ 
are complex Hermitian, then $H(x)$ is not holomorphically differentiable
(since the diagonal entries of $H(x)$ are always real and cannot be 
analytic functions).  In this case, the matrix $J_{\rm s}(x)$ in~\eqref{eq:jsx}
no longer corresponds to the (holomorphic) Jacobian of $H(x/\|x\|)x$.
Nevertheless, $J_{\rm s}(x)$ is well-defined and Hermitian 
(replacing all the transpose $\cdot^T$ to conjugate transpose $\cdot^H$).
We can still use it for the iteration~\eqref{eq:syminverse}.

The SCF with an optional acceleration for solving the mNEPv~\eqref{eq:nepv}
is summarized in~\Cref{alg:scflf}.
A few remarks on the implementation detail are in order.

\begin{algorithm}[tbhp]
\caption{The SCF with optional acceleration}\label{alg:scflf}
\begin{algorithmic}[1]
\REQUIRE Starting vector $x_0\in\C^n$,
residual tolerance $\mbox{tol}$, and 
acceleration threshold $\mbox{tol}_{\rm acc}$.

\ENSURE Approximate eigenpair $(\lambda_k,x_k)$ of the mNEPv~\eqref{eq:nepv}.

\FOR{$k=1,2,\dots$}

\STATE \label{alg:scflf:scf}
$H(x_{k-1})\, x_{k}  = \lambda_{k} \cdot x_{k}$ with 
$\lambda_{k} =\lambda_{\max}(H(x_{k-1}))$;  
\COMMENT{\hfill \% SCF}

\STATE 
{\bf if} $\res(x_{k})\leq \mbox{tol}$, {\bf then} return 
$(\lambda_{k},x_{k})$; 
\COMMENT{\hfill \% test for convergence}
\label{alg:scflf:convtst}

\IF[\hfill \%  acceleration if activated]
{$\res(x_{k})\leq \mbox{tol}_{\rm acc}$}
\label{alg:scflf:acc}  

\STATE 
compute $\widetilde x_{k}$ by~\eqref{eq:syminverse} 
with the shift $\sigma_k = x^H_k H(x_k) x_k$.
\label{alg:scflf:ref}

\STATE 
{\bf if} $\objf(\widetilde x_{k})>\objf(x_{k})$, {\bf then} update $x_{k} = \widetilde x_{k}$;
\label{alg:scflf:mono}

\ENDIF 
\ENDFOR
\end{algorithmic}
\end{algorithm} 

\begin{enumerate}
\item
The initial $x_0$, in view of the geometry of the SCF discussed in~\Cref{sec:geo},
can be chosen from sampled supporting points of $W(\mathcal A)$.

Let $v_i\in\R^m$, for $i=1,\dots, \ell$, be 
randomly sampled search directions.
We can first find the supporting points $y_{v_i} = \rqs(x_{v_i})$ of 
$W(\mathcal A)$,
and then among $x_{v_1},\dots, x_{v_\ell}$, take the one with the largest
value $ \objf(x_{v_i})$ as the initial $x_0$. 
This greedy sampling schme increases the chance for 
the SCF to find the global maximizer of the aMax~\eqref{eq:maxf}.

For computation, recall~\Cref{lem:support} that each $x_{v_i}$ is 
an eigenvector corresponding to the largest eigenvalue of 
the Hermitian matrix $H_{v_i}$ defined in \eqref{eq:Hvdef}. 
Therefore, it requires to solve $\ell$ Hermitian eigenvalue problems
for sampling $\ell$ supporting points. For efficiency, 
we can exploit the fact that the smallest eigenvalue of $H_{v_i}$ 
is corresponding to the largest of $H_{-v_i} \equiv -H(v_i)$.
Hence, we can compute two supporting points in both directions $\pm v_i$
by solving a single eigenvalue problem of $H_{v_i}$. 

\item
\Cref{alg:scflf} requires 
finding the eigenvector corresponding to the largest eigenvalue
of the matrix $H(x_{k-1})$ in line~\ref{alg:scflf:scf}. 
In addition, when the acceleration is applied, a solution of linear system 
with coefficient matrix $J_{\rm s}(x_k) - \sigma_k I$  in line~\ref{alg:scflf:ref}.
For the mNEPv of small to medium sizes, direct solvers can be applied, 
such as QR algorithm for Hermitian eigenproblems and  LU factoration 
for linear systems.
For large sparse problems, iterative solvers are applied,
e.g., the Lanczos type methods for Hermitian eigenproblems 
(such as MATLAB \text{eigs}), and MINRES and SYMMLQ 
for linear systems; see, e.g.,~\cite{Bai:2000, Barrett:1994}.


\item 
The acceleration with the inverse iteration is expected to 
work well for $x_k$ close to a solution. 
We introduced a threshold $\mbox{tol}_{\rm acc}$ to control 
the activation of inverse iteration in line~\ref{alg:scflf:acc}.   
If $\mbox{tol}_{\rm acc}=0$, \Cref{alg:scflf} runs the SCF (no
acceleration). If $\mbox{tol}_{\rm acc}=\infty$, 
\Cref{alg:scflf} applies acceleration at each step.
We observe that the choice of $\mbox{tol}_{\rm acc}$ is 
usually not critical 
and $\mbox{tol}_{\rm acc}=0.1$ is used as a default value in our 
numerical experiments.

\item 
To maintain the monotonicity of $\objf(x_k)$, as in the SCF,
the accelerated eigenvector $\widetilde x_{k}$ is accepted only if 
$\objf(\widetilde x_{k})\geq \objf(x_{k})$ in line~\ref{alg:scflf:mono}. 


\item
We use the relative residual norm
\begin{equation}\label{eq:resd}
 \res (\widehat{x}) : = 
  \frac{\|H(\widehat{x}) \widehat{x} - 
(\widehat{x}^H H(\widehat{x})\widehat{x})\cdot 
\widehat{x}\|}{\|H(\widehat{x})\|}
\end{equation}
to assess the accuracy of an approximate eigenvector $\widehat{x}$ 
in line \ref{alg:scflf:convtst},
where $\|H(\widehat{x})\|$ is some convenient to evaluate matrix norm,
e.g., the matrix $1$-norm as we used in the experiments.


\end{enumerate}


\section{Applications} \label{sec:nepvegs} 
The mNEPv~\eqref{eq:nepv} and the associated 
aMax~\eqref{eq:maxf} can be found in numerous applications.
In this section, we discuss three of them. 
The first one is known as the quartic maximization 
over the Euclidean sphere and its applications for computing
the numerical radius. The second one is on the best rank-one
approximation of third order partial-symmetric tensors. 
The third is from the study of the distance to singularity of 
dissipative Hamiltonian differential-algebraic equation systems, or
dHDAE.

\subsection{Quartic maximization and numerical radius} \label{eg:quatic}  
A {\em (homogeneous) quartic maximization} over the Euclidean sphere 
is of the form
\begin{equation}\label{eq:fxqrt}
\max_{x\in\C^n,\, \|x\|=1} 
\Big\{\objf(x) := \frac{1}{2}\sum_{i=1}^{m}\left(x^HA_ix\right)^2\Big\}, 
\end{equation}
where $A_i$ are $n$-by-$n$ Hermitian matrices. 
The optimization~\eqref{eq:fxqrt} is a classical problem in 
the field of polynomial optimization, although in the literature 
it is usually formulated in real variables, 
i.e., $x\in\R^n$ with symmetric $A_i$~\cite{He:2010,Nesterov:2003,Zhang:2012}. 
In addition, such problems also arise in the study of
robust optimization with ellipsoid uncertainty~\cite{Ben:1998}.
Observe that the quartic maximization~\eqref{eq:fxqrt} is 
an aMax~\eqref{eq:maxf} with
$\phi_i(t) = t^2/2$. Hence the underlying mNEPv~\eqref{eq:nepv} is 
of the form
\begin{equation}\label{eq:hxqrt}
H(x)\, x=\lambda x 
\quad\text{with}\quad
H(x) = \sum_{i=1}^m (x^HA_ix)\cdot A_i,
\end{equation}
where the coefficient functions $h_i(t)=\phi_i'(t)=t$ 
are differentiable and non-decreasing. 

The most simple but non-trivial example of 
the quartic optimization~\eqref{eq:fxqrt} 
is $m=2$. It is the well-known problem of
computing the numerical radius of a square matrix.
The {\em numerical radius} 
of a matrix $B\in\C^{n\times n}$ is defined as
\begin{equation}\label{eq:rb}
r(B) := \max_{x\in\C^n,\, \|x\|=1} |x^HBx|
= \max_{x\in\C^n,\, \|x\|=1} \Big((x^HA_1x)^2+ (x^HA_2x)^2\,\Big)^{1/2}, 
\end{equation}
where $A_1 = \frac{1}{2}(B^H+B)$ and $A_2 = \frac{\imath}{2}(B^H-B)$, both
are Hermitian matrices~\cite{Horn:2012}. 
An extension of~\eqref{eq:rb} is 
the {\em joint numerical radius} of 
an $m$-tuple of Hermitian matrices $\mathcal A= (A_1,\dots,A_m)$
defined as 
\begin{equation}\label{eq:jrd}
r(\mathcal A) := \max_{x\in\C^n,\, \|x\|=1} 
\left(\sum_{i=1}^m(x^HA_ix)^2\right)^{1/2},
\end{equation}
see~\cite{Cho:1981}.
The (joint) numerical radius plays important roles in numerical analysis.
For examples, the numerical radius of a matrix  
is applied to quantify the transient effects of
discrete-time dynamical systems and 
analyze classical iterative methods~\cite{Axelsson:1996,Trefethen:2005}.
The joint numerical radius of a matrix tuple 
is used for studying the joint behavior of several operators; 
see~\cite{Li:2012} and references therein.

Numerical algorithms for computing the numerical radius of a single matrix 
have been extensively 
studied~\cite{He:1997,Mengi:2005,Mitchell:2020,Uhlig:2009,Watson:1996}.
To find the global maximizer of~\eqref{eq:rb},
many methods adopt the scheme of local optimization followed 
by global certification. Most of those algorithms, however, 
do not immediately extend to computing the joint numerical radius 
with $m\geq 3$, and neither do they exploit the connection with the NEPv as developed 
in this paper. 
As a major benefit of the NEPv approach, it allows for fast
computation of the local maximizers of the problems, so it
can be used to accelerate existing approaches.
Moreover, the NEPv approach provides a unified treatment for 
matrix tuple $\mathcal A$ with $m$ matrices and, hence, 
can serve as the basis for future development of algorithms
towards the global solution of $r(\mathcal A)$ with $m\geq 3$.

%

\subsection{Best rank-one approximation of third order partial-symmetric tensors}
\label{sec:tensor}  

Let $T\in\R^{n\times n\times m}$ be a third order
partial-symmetric tensor, i.e., $A_i:=T(:,:,i)\in\R^{n\times n}$ 
is symmetric for $i=1,\dots,m$. 
The problem of the best rank-one partial-symmetric tensor 
approximation is defined by the minimization 
\begin{equation} \label{eq:tensor} 
\min_{\mu \in \R,\, x\in\R^n,\, z\in\R^m\atop \|x\|=1,\|z\|=1}
\|T - \mu\cdot x\otimes x\otimes z\|_F^2,
\end{equation} 
where $\otimes$ denotes the Kronecker product. 
The solution of \eqref{eq:tensor} provides a 
rank-one partial-symmetric tensor 
$\mu_*\cdot x_*\otimes x_*\otimes z_*$ 
that best approximates $T$ in the Frobenius norm $\|\cdot\|_F$
and is also known as a truncated rank-one CP decomposition of $T$; 
see, e.g.,~\cite{Kolda:2009,Zhang:2012}.

The best rank-one approximation~\eqref{eq:tensor} are 
often recast as a quartic
maximization~\eqref{eq:fxqrt}; see, e.g.,~\cite[Eq.~(6)]{Chen:2021}.
Let $x_i$ denote the $i$-th element of a vector $x$. Then  
\begin{equation} \label{eq:squarefun} 
\|T - \mu\cdot x\otimes x\otimes z\|_F^2 
 = \|T\|_F^2 + \mu^2 - 2\mu \sum_{i,j,k}t_{ijk}  x_i x_j z_k, 
\end{equation}
where the range of indices $i,j,k$ are omitted in the summation for clarity. 
Since 
the minimum w.r.t. $\mu$ is achieved at 
$\mu  = \sum_{i,j,k}t_{ijk} x_i x_j z_k$,
the best rank-one approximation~\eqref{eq:tensor} becomes the maximization 
\begin{equation}\label{eq:tquartic}
\max_{\|x\|=1\atop \|z\|=1}
\bigg(\sum_{i,j,k}t_{ijk} x_ix_j z_k\bigg)^2
= 
\max_{\|x\|=1\atop \|z\|=1}
\bigg(\sum_{k}z_k\cdot x^TA_kx\bigg)^2
= 
\max_{\|x\|=1}
\sum_{k} \left(x^TA_kx\right)^2,
\end{equation}
where the first equality is by 
$A_i=T(:,:,i)$, and the second equality is 
due to the maximization w.r.t. $z$ is 
solved at 
\begin{equation}\label{eq:lamz}
z = \alpha\cdot \rqs(x) \equiv \alpha \cdot [x^TA_1x, \dots, x^TA_mx]^T
\end{equation}
with $\alpha$ being a normalization factor for $\|z\|=1$ provided that 
$\rqs(x)\neq 0$. 
The formula of $z$ in~\eqref{eq:lamz} 
follows from $|z^T\rqs(x)|^2\leq \|\rqs(x)\|^2$ with 
equality holds if $z=\rqs(x)/\|\rqs(x)\|$.


Problem~\eqref{eq:tquartic} is a quartic maximization~\eqref{eq:fxqrt}
with real symmetric $A_i$ and real variables $x\in\R^n$.
By~\Cref{thm:vc}, the optimizer $x_*$ is an eigenvector of the mNEPv~\eqref{eq:hxqrt}. 
The corresponding eigenvalue
\begin{equation}\label{eq:ceig}
\lambda_* = x_*^TH(x_*)x_* = \sum_{k} \left(x_*^TA_kx_*\right)^2 = \mu_*^2.
\end{equation}
Note that any other eigenvalue $\lambda$ of~\eqref{eq:hxqrt} must satisfy
$\lambda \equiv x^TH(x)x = \sum_{k} \left(x^TA_kx\right)^2 \leq \lambda_*$,
due to~\eqref{eq:ceig} and maximization~\eqref{eq:tquartic}.

The best rank-one approximation is a fundamental problem in tensor analysis;
see~\cite{Lathauwer2000,Kofidis:2002,Zhang:2001}.
Third order partial-symmetric tensors are intensively studied~\cite{Carroll:1970,Li:2015,Qi:2018,Zhang:2012} and found 
in applications such as crystal structure~\cite{Chen:2021,Nye:1985}, 
where they are termed \emph{piezoelectric-type} tensors,
and modeling of social networks \Cref{eg:tensor}.  
It is known that tensor rank-one approximation problems are closely
related to tensor eigenvalue problems~\cite{Qi:2018},
such as the \emph{Z-eigenvalue}~\cite{Qi:2005} and
$\ell^2$-eigenvalue~\cite{Lim:2005} for general supersymmetric tensors 
and \emph{C-eigenvalue} for third order partial-symmetric 
tensors~\cite{Chen:2021}.  
Tensor eigenvalue problems provide first-order optimality 
conditions for the best rank-one approximation. 
But those eigenvalue problems are neither formulated nor studied
through the lens of the NEPv as presented in this paper. 
Particularly, for a third order partial-symmetric tensor,
its largest C-eigenvalue $\mu_*$ and the corresponding
C-eigenvectors $(x_*,z_*)$ form the best rank-one approximation
from~\eqref{eq:tensor}; see, e.g.,~\cite{Chen:2021}.
Whereas the tensor C-eigenvalue problem consists of two (coupled)
nonlinear equations in $(\mu, x, z)$, which are fundamentally different
from the mNEPv~\eqref{eq:hxqrt}.
How to efficiently solve those nonlinear equations for the C-eigenvalue 
is still largely open.



\subsection{Distance problem in dHDAE systems} \label{eg:dhdae}  
Consider the following 
dissipative Hamiltonian 
differential-algebraic equation (dHDAE):  
\begin{equation}\label{eq:structured}
J \frac{d^ju}{dt^j} 
= B_0 + B_1 \frac{du}{dt} + \cdots+B_\ell \frac{d^\ell u}{dt^\ell},
\end{equation}
where 
$u\colon \R \to\R^n$ is a state function,
$j$ is an integer between $0$ and $\ell$,
$J=-J^T$ is skew symmetric, and $B_i\succeq 0$ are symmetric
positive semi-definite for $i=0,\dots,\ell$. 
By convention, $\frac{d^0u}{dt^0} = u$.
The dHDAE~\eqref{eq:structured} arises  in energy based modeling 
of dynamical systems~\cite{Mehl:2021,Van:2014}.
An important special case is with $j=0$ and $\ell=1$, 
known as the linear time-invariant dHDAE system~\cite{Beattie:2018,Van:2014}.
Another one is the second-order dHDAE \eqref{eq:structured} with $j=1$ 
and $\ell=2$~\cite{Beattie:2018,Mehl:2021}.

To analyze the dynamical properties of a dHDAE system,
one needs to know whether the system is close to a singular one.
A dHDAE system~\eqref{eq:structured} is called \emph{singular} 
if $\det(P(\lambda)) \equiv 0$ for all $\lambda\in \C$, where
$P(\lambda)$ is the characteristic matrix polynomial defined by 
\begin{equation}\label{eq:llam}
P(\lambda) = -\lambda^j J 
	+ B_0 + \lambda B_1+\cdots+\lambda^\ell B_\ell.
\end{equation}
The distance of a dHDAE system to the closest singular 
dHDAE system is measured by the quantity $d_{\rm sing} (P(\lambda))$, which 
can be evaluated through the following optimization problem:
\begin{align} \label{eq:dsing}
d_{\rm sing} (P(\lambda))
 = \min_{x\in\R^n \atop \|x\|=1}
\left\{ 2\|Jx\|^2 + \sum_{i=0}^\ell \Big( 2\|(I-xx^T)B_ix\|^2 + 
(x^TB_ix)^2\Big) \right\}^{1/2},
\end{align}
see~\cite[Thm.16]{Mehl:2021}.
Let us show that the optimization~\eqref{eq:dsing} 
can be reformulated as the mNEPv~\eqref{eq:nepv}.  
First, by the skew-symmetry of $J$ and the symmetry of $B_i$, 
we can write~\eqref{eq:dsing} as 
\begin{align}
\Big( d_{\rm sing} (P(\lambda))\Big)^2  
& =\min_{x\in\R^n \atop \|x\|=1} 
 \left\{ 2\cdot x^T(J^TJ)x + 
\sum_{i=0}^\ell \left[ 2x^T(B_i^TB_i)x - (x^TB_ix)^2\right] \right\} \nonumber \\  
& = -2\cdot \max_{x\in\R^n \atop \|x\|=1}
\left\{ x^TA_1x + \frac{1}{2}\sum_{i=2}^{\ell+2} (x^TA_ix)^2 \right\}, \label{eq:dispoly}
\end{align}
where $A_1 \equiv J^2-\sum_{i=0}^\ell B_i^2$ 
and $A_{i} \equiv B_{i-2}$ for $i=2,\dots,\ell+2$.
Consequently,~\eqref{eq:dispoly} 
is of the form of the aMax~\eqref{eq:maxf}:
\begin{equation}\label{eq:fxdae}
\max_{x\in\R^n,\, \|x\|=1}
\Big\{ \objf(x) := x^TA_1x + \frac{1}{2}\sum_{i=2}^{\ell+2} \left(x^TA_ix\right)^2
\Big\},
\end{equation}
with $\phi_1(t)=t$ and $\phi_i(t)=t^2/2$ for $i=2,\dots,\ell+2$.
By the variational characterization in \Cref{thm:vc}, 
a local maximizer of \eqref{eq:fxdae} can be found by solving 
the following mNEPv of the form~\eqref{eq:nepv}:
\begin{equation}\label{eq:hxdae}
H(x)x=\lambda x
\quad\text{with}\quad
H(x) \equiv A_1+ \sum_{i=2}^{\ell+2} (x^TA_ix)\cdot A_i.
\end{equation}
where $h_1(t) = 1$ and $h_i(t) = t$ for $i=2,\dots,\ell + 2$ are non-decreasing 
and differentiable functions.

There are a couple of studies on estimating the
upper and lower bounds of the quantity
$d_{\rm sing}(P(\lambda))$~\cite{Mehl:2021,Prajapati:2022}. 
For linear systems, there is a recent work for estimating 
$d_{\rm sing} (P(\lambda))$ with a two-level minimization 
using ODE-based gradient flow~\cite{Guglielmi:2022}.
The mNEPv approach provides an alternative for estimating 
$d_{\rm sing} (P(\lambda))$ of dHDAE systems of an arbitrary order; 
see Examples~\ref{eg:dhdae2} and \ref{eg:dhdae3} 
in \Cref{sec:numex}.



\section{Numerical examples}  \label{sec:numex}
In this section, we present numerical examples of~\Cref{alg:scflf}
for solving the mNEPv~\eqref{eq:nepv} arising from the applications 
described in~\Cref{sec:nepvegs}.  The main purpose of the experiments 
is to illustrate the convergence behavior of the SCF
(\Cref{alg:scflf} with $\mbox{tol}_{\rm acc}=0$)
and the efficiency of accelerated SCF
(\Cref{alg:scflf} with $\mbox{tol}_{\rm acc}=0.1$).
The error tolerance for both algorithms are set to $\mbox{tol}=10^{-13}$.
All experiments are carried out in MATLAB 
and run on a Dell desktop with Intel i9-9900K CPU@3.6GHZ 
and 16GB core memory. 


\begin{example} \label{eg:rand}
{\rm  ~ 
Consider the computation of 
the numerical radius for a matrix $B \in \mathbb{C}^{n\times n}$.
As discussed in~\Cref{eg:quatic}, the related mNEPv is given by~\eqref{eq:hxqrt0}
and the variational characterization is by the
optimization~\eqref{eq:numrdmax0}
with Hermitian matrices $A_1=(B^H+B)/2$ and $A_2=(B^H-B)\cdot \imath/2$.
For numerical experiment, we consider the following matrix
\begin{equation}\label{eq:bmat}
    B = 
    \begin{bmatrix}
    0.6&   -0.2&   -1.9&   -0.3\\
   -0.1&   -0.3&   -1.3&   -1.2\\
   -2.0&   -1.6&   -2.1&    1.3\\
   -0.1&   -1.6&    1.5&   -0.1
    \end{bmatrix}
    + \imath
    \begin{bmatrix}
	0.6&    2.5&   -0.2&    2.5\\
    2.3&   -2.6&    0.4&    1.3\\
    0.0&    0.6&   -0.4&    1.2\\
    2.0&    1.4&    1.0&   -2.3
    \end{bmatrix}.
\end{equation}
The corresponding numerical range $W(A_1,A_2)$ 
is depicted in the left plot of~\Cref{fig:numrd:randnr} as the shaded region.
Following the discussion in~\Cref{sec:geo}, 
we sampled $100$ different starting vectors $x_0$
to run the SCF, where each $y_0 = \rqs(x_0)$ is a supporting point of
$W(A_1,A_2)$, depicted in~\Cref{fig:numrd:randnr} as dots on
the boundary of $W(A_1,A_2)$. 
Recalling the discussion on the implementation of~\Cref{alg:scflf}, 
such initial iterates $x_0$ can be obtained from the 
eigenvectors $x_{v}$ of the matrix $H_{v}$ 
for sampled directions $v\in\R^2$ (see~\Cref{lem:support} and~\Cref{sec:impl}).
Note that we can represent a unit direction $v\in\R^2$ by 
polar coordinates as $v=[\cos\theta, \sin\theta]^T$ with
$\theta\in[0,2\pi)$.
The initial vectors $x_0$ are set as
\begin{equation}\label{eq:ex1init}
	x_0 := x_v \quad \text{with $v=[\cos\theta, \sin\theta]^T$},
\end{equation} 
using $100$ equally distant $\theta$ between $0$ and $2\pi$.
The sampled $\rqs(x_0)$ are well distributed on the boundary of
$W(A_1,A_2)$, as shown in~\Cref{fig:numrd:randnr}.

\begin{figure}[t]
\centering\includegraphics[width=0.42\textwidth]{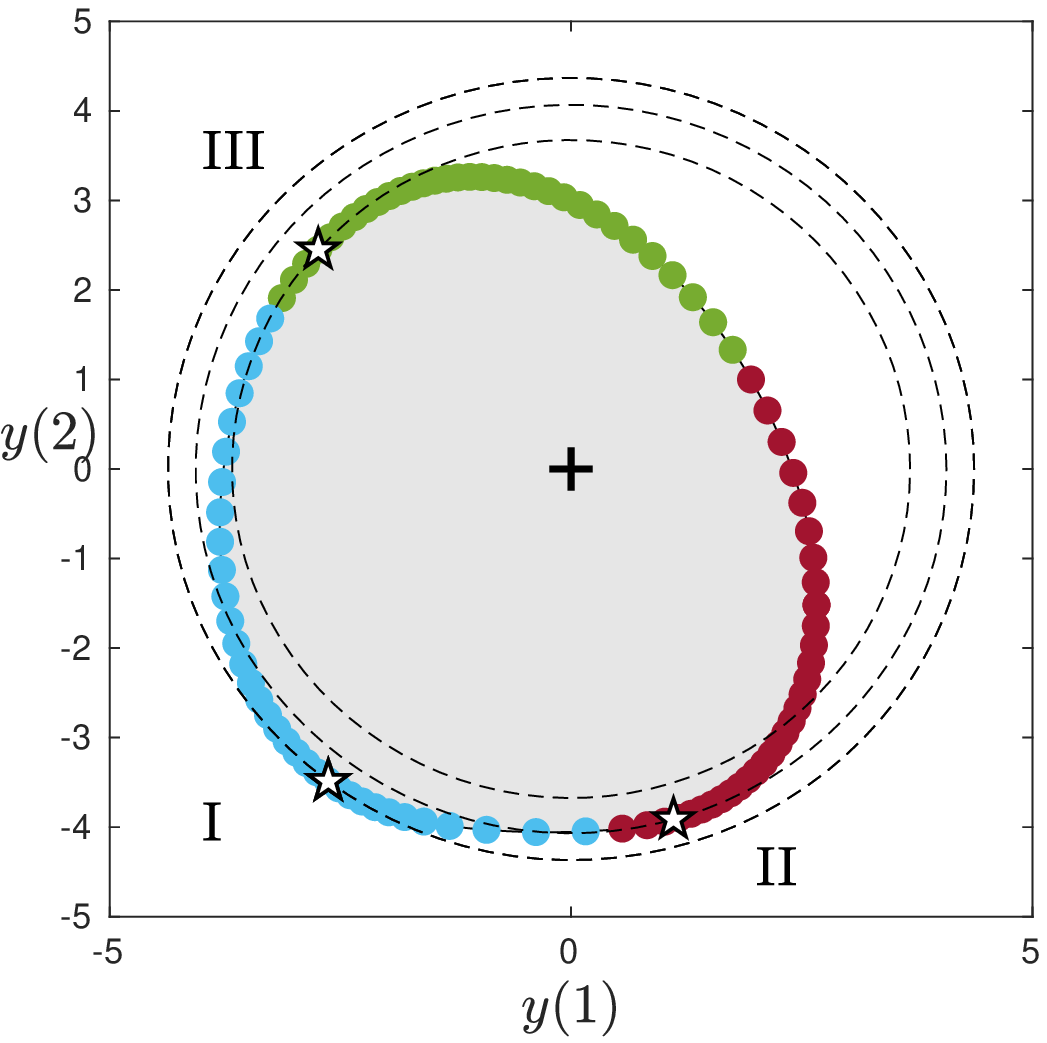}
\includegraphics[width=0.45\textwidth]{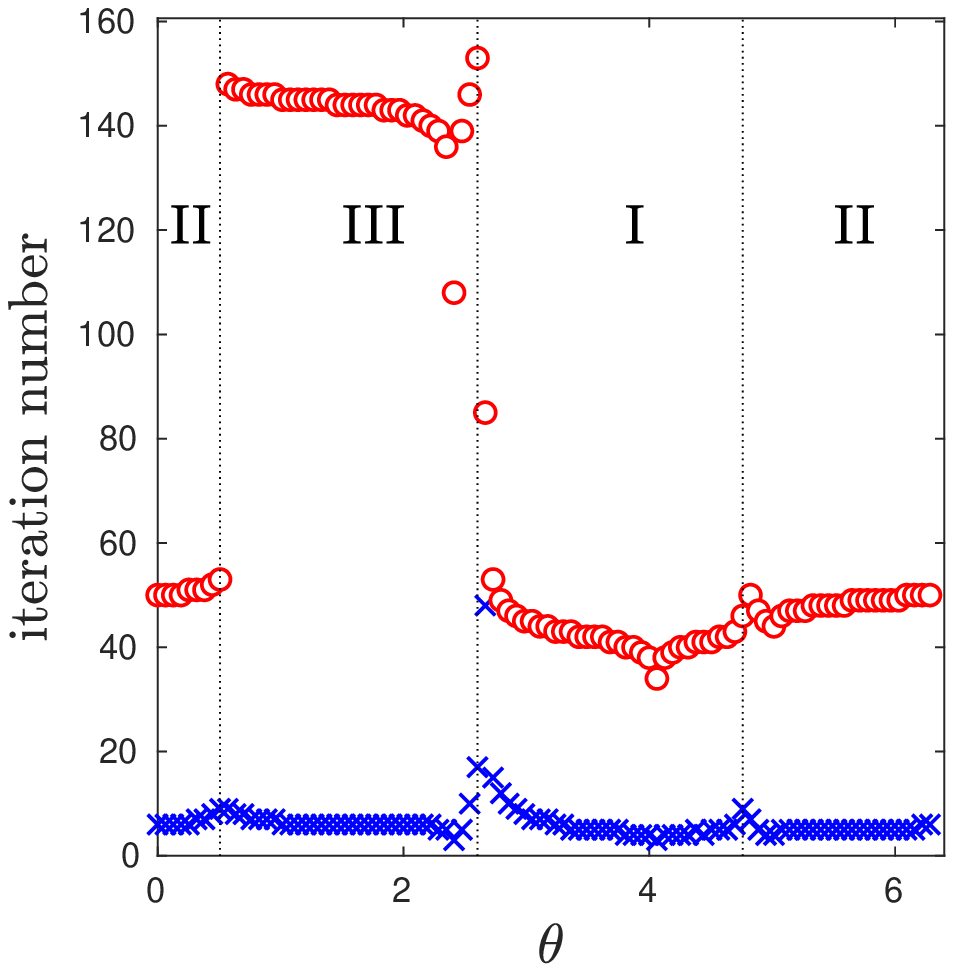}
\caption{
Left: Numerical range $W(A_1,A_2)$ of the matrix in~\eqref{eq:bmat},
where {\FiveStarOpen} represent the solution for the mNEPv
and $\bullet$ the starting $\rqs(x_0)$ of the SCF;
The $\bullet$ are colored according to the solution they have computed
(blue is for solution I, red for II, and green for III);
The dashed are contours of
$\phi(y) = 2^{-1}\|y\|_2^2 $; see~\eqref{eq:numrdmax0}.
Right: Number of iterations by the SCF
(marked `o') and accelerated SCF (marked  `$\times$')
for different starting $x_0$, parameterized by
$\theta\in[0,2\pi)$ as in~\eqref{eq:ex1init}.
}\label{fig:numrd:randnr}
\end{figure}

For $100$ runs of the SCF, three different solutions are found.
In~\Cref{fig:numrd:randnr}, they are labeled respectively with I, II, III, 
in descending order of their objective values of~\eqref{eq:numrdmax0}.
The initial $\rqs(x_0)$ on the boundary of $W(A_1,A_2)$ 
are colored the same if SCF will converge to the same solution,
which, hence, reveals the region of convergence for SCF.
The numbers of SCF iterations with each $x_0$ are reported in the right
plot of~\Cref{fig:numrd:randnr}. 
We can see that the iterations are determined by the 
eigenvector SCF computed, and they stay almost flat for that eigenvector.
For the SCF, the iteration numbers vary for different solution.
Whereas for the accelerated SCF, they are almost independent of the
choice of the initial $x_0$, with only a moderate increase on 
the boundary of two convergence regions.

The left plot of~\Cref{fig:numrd:rand}
depicts the convergence history of the objective function $\objf(x_k)$
for four different starting vectors $x_0$, 
corresponding to the equally distant 
$\theta \in \{0,\,\pi/2,\, \pi,\, 3\pi/2\}$
from~\Cref{fig:numrd:randnr}.
As expected the SCF demonstrates monotonic convergence.
The right plot in~\Cref{fig:numrd:rand} shows 
the relative residual norms of $(\lambda_k, x_k)$ as defined in~\eqref{eq:resd}.
We can see that the SCF quickly 
enters the region of linear convergence in all cases (in about 3 iterations).
The acceleration takes full advantage of the rapid
initial convergence and speeds up the SCF significantly. 
We note that in this example the matrices $A_1$ and $A_2$ are complex
Hermitian. The inverse iteration~\eqref{eq:syminverse} with Rayleigh shift
$\sigma_k$ is not guaranteed quadratically convergent. 

\begin{figure}[t]
\centering
\includegraphics[width=0.45\textwidth]{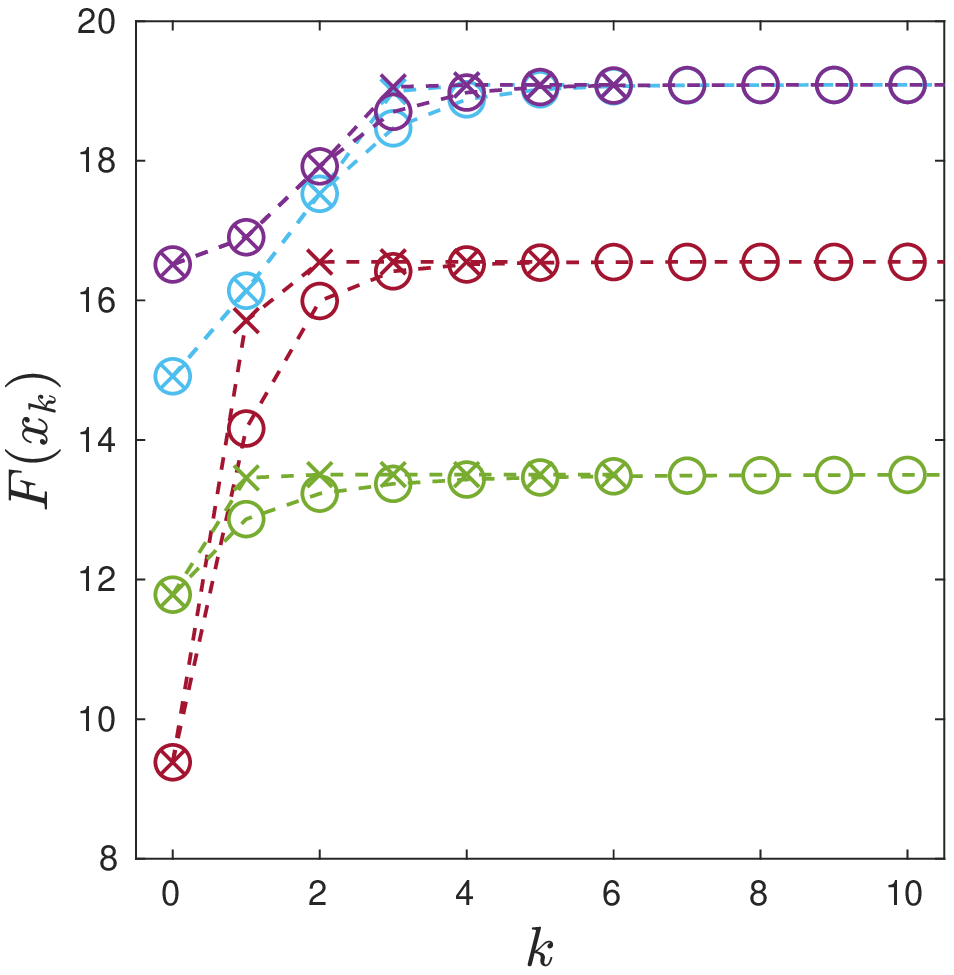}
\includegraphics[width=0.45\textwidth]{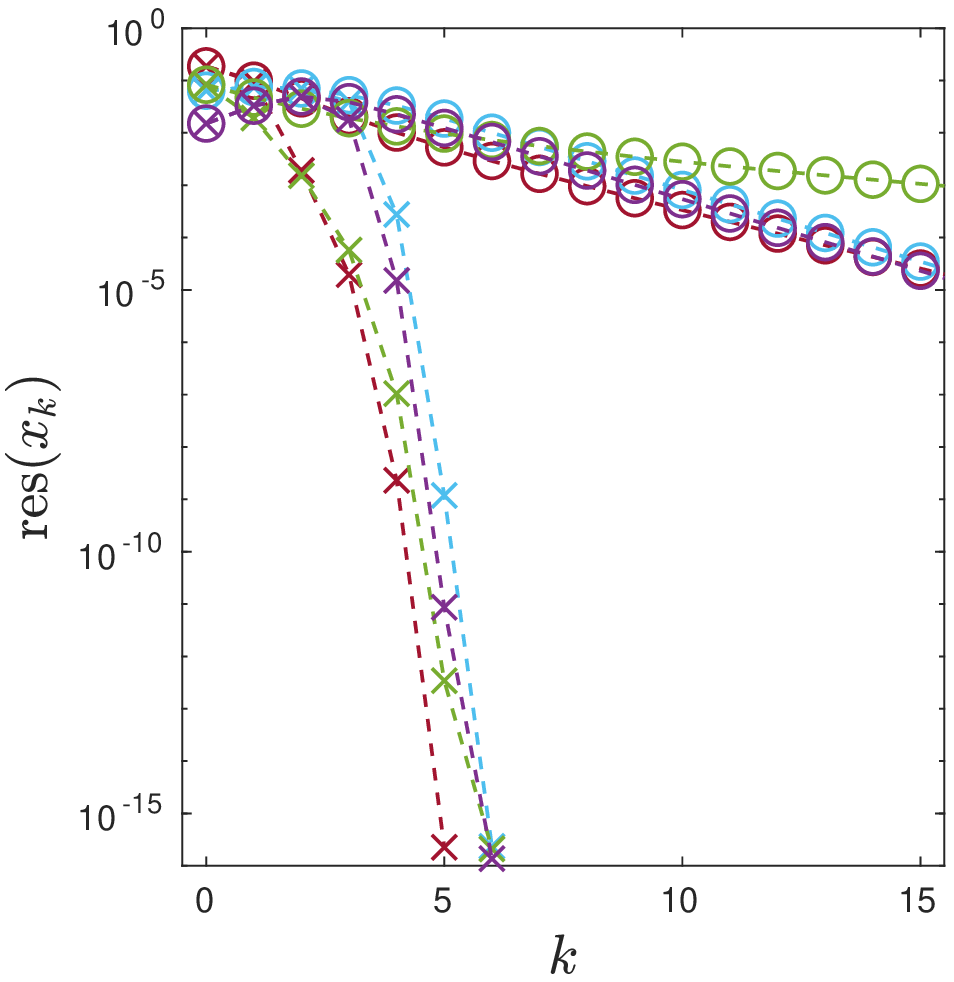}
\caption{
\Cref{eg:rand}.
Left: Convergence history of $\objf(x_k)$ by the SCF
(marked `o') and accelerated SCF (marked  `$\times$'),
where each colored curve is a run with
a particular $x_0$ from $4$ different starting vectors.
Right: Relative residual norms~\eqref{eq:resd}. 
}\label{fig:numrd:rand}
\end{figure}


} \end{example}


\begin{example} \label{eg:dhdae2} 
{\rm 
We consider the mNEPv~\eqref{eq:hxdae}
arising in the distance problem of dHDAE systems described in~\Cref{eg:dhdae}.
The characteristic polynomial of a linear dHDAE system is given by
\begin{equation}\label{eq:plep}
P(\lambda) :=-J +R + \lambda E,
\end{equation}
where $J=-J^T$ is a skew symmetric, and $E$ and $R$ 
are symmetric positive definite matrices.
As discussed in~\Cref{eg:dhdae}, the computation of 
distance to singularity $d_{\rm sing} (P(\lambda))$
leads to the optimization~\eqref{eq:dispoly} 
and the associated mNEPv~\eqref{eq:hxdae}, 
where
\begin{equation}\label{eq:dhdaelin}
\objf(x)= x^TA_1x + \frac{1}{2}\sum_{i=2}^3(x^TA_ix)^2
\quad \mbox{and} \quad
H(x) = A_1+ \sum_{i=2}^3(x^TA_ix)\cdot A_i,
\end{equation}
and $A_1 = J^2-E^2-R^2$, $A_2 = E$ and $A_3 = R$.

For numerical experiments, 
the matrices $\{J,R,E\}$ of order 30 are 
generated randomly.\footnote{For the positive definite $E$ and $R$, we use:
	\texttt{X=randn(n); X = orth(X); X = X*diag(rand(n,1)+1.6E-6)*X'}.
		For the skew symmetric $J$, we use:
	\texttt{X=randn(n); X = X-X'; X = X/norm(X)}.
	}
Similar to~\Cref{eg:rand}, the initial vectors $x_0$ of the SCF are computed 
from supporting points of the joint numerical range 
$W(A_1,A_2,A_3)\subset\R^3$ along several sampled direction $v\in\R^3$.
Here, recall that a unit $v\in\R^3$ 
can be represented by spherical coordinates as 
\begin{equation}\label{eq:dhdaeinit}
	v=[\sin\eta \cos\theta,\, \sin\eta\sin\theta,\, \cos\eta]^T
	\quad\text{with $\eta\in[0,2)$ and $\theta\in[0,2\pi)$}.
\end{equation}
We have therefore constructed an equispaced grid of $20$-by-$40$ points of
$(\eta,\theta)\in[0,\pi]\times [0,2\pi]$, 
yielding $800$ supporting points of $W(A_1,A_2,A_3)$.
They are depicted in the left plot of~\Cref{fig:dHDAE:nr},
together with the approximate numerical range 
$W(A_1,A_2,A_3)$ they generate.\footnote{
	Plot generated by MATLAB functions
	\texttt{trisurf} and~\texttt{boundary}
	using the $800$ sampled supporting points.
}

\begin{figure}[t]
\centering 
\includegraphics[width=0.54\textwidth]{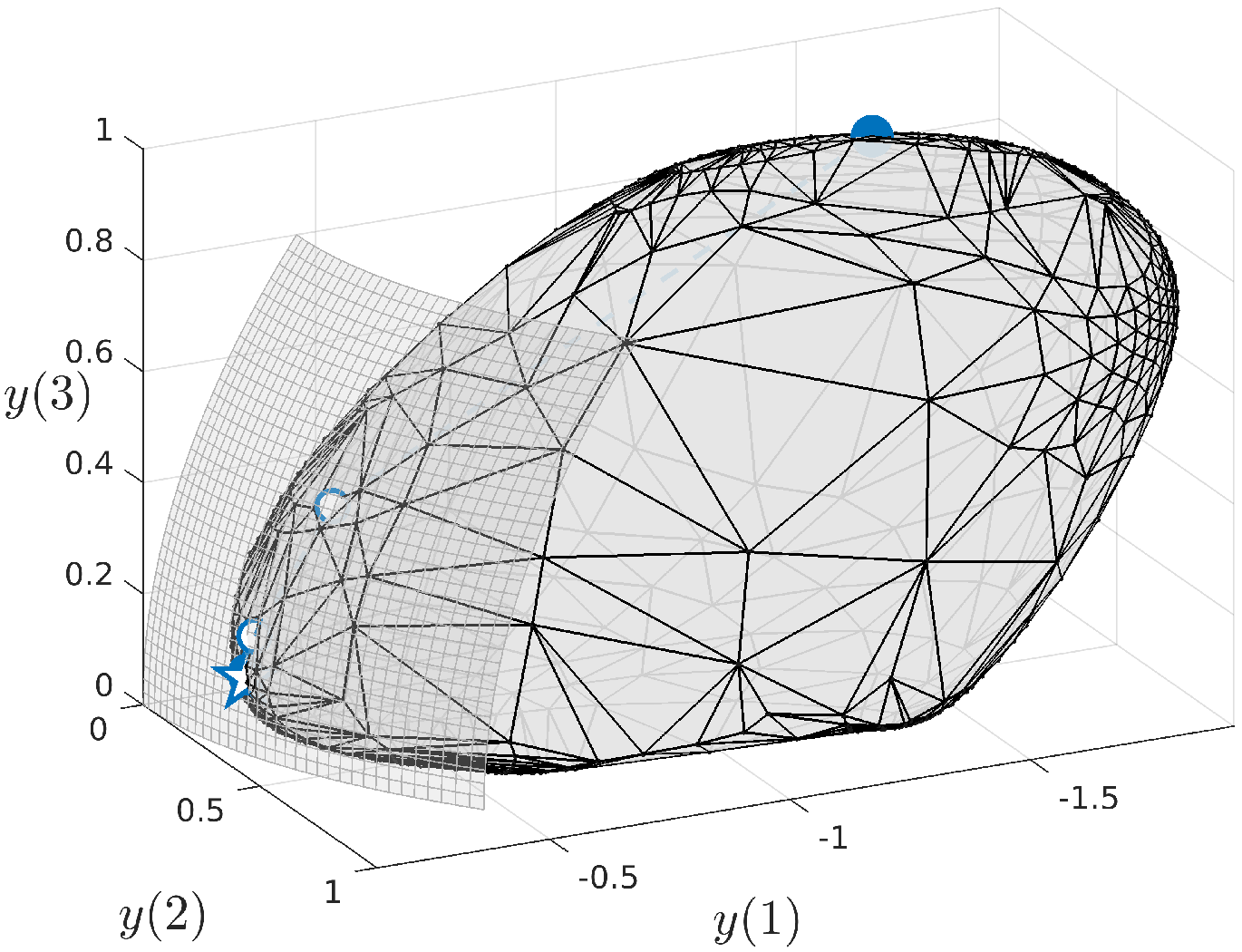}
\includegraphics[width=0.44\textwidth]{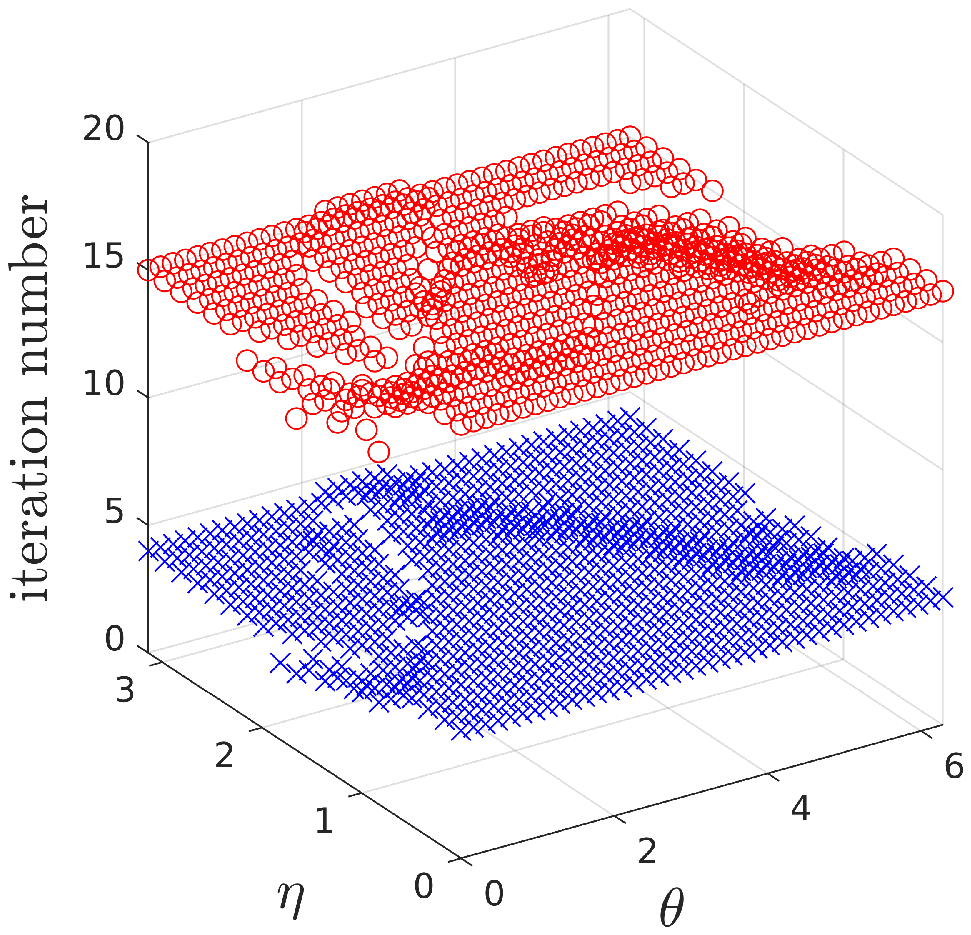}
\caption{
Left: Approximate numerical range $W(A_1,A_2,A_3)$, based on $800$ 
sample supporting points on the boundary (nodes of the mesh);
The $\text{\FiveStarOpen}$ represents the solution for the mNEPv, 
$\bullet$ the starting $\rqs(x_0)$, and `$\circ$' the first few
supporting points $\rqs(x_k)$ by SCF;
The smaller mesh that crosses $\text{\FiveStarOpen}$ is part of the level-surface 
$\phi(y) = \phi(y_*)$ for $\phi(y)=y(1)+(y(2)^2+y(3)^2)/2$
at the solution $y_*=\rqs(\widehat x_*)$.
Right:
Number of iterations by the SCF
(marked `o') and the accelerated SCF (marked  `$\times$')
for different starting $x_0$, parameterized by 
$\theta\in[0,2\pi)$ and $\eta\in[0, \pi)$ as in~\eqref{eq:dhdaeinit}.
}\label{fig:dHDAE:nr}
\end{figure}

From all $800$ initial $x_0$, the SCF converge to a same solution, as
marked in the left plot of~\Cref{fig:dHDAE:nr}.
This solution appears to be the global optimizer of the
optimization~\eqref{eq:dispoly}, as visually verified by the
level-surface of the objective function $\phi(y)$ for the
corresponding optimization over the joint numerical range~\eqref{eq:maxphi}.
From the numbers of iterations reported in the right plot
of~\Cref{fig:dHDAE:nr}, we can see that both 
SCF and accelerated SCF converge rapidly to the solution.
The iteration numbers are not sensitive to the choice of initial
vectors.


\Cref{fig:dHDAE:lin} depicts the convergence history of $\objf(x_k)$ 
and the relative residual norms by the 
SCF from six different starting vectors
$x_0$ (sampled supporting points of $W(A_1,A_2,A_2)$ along the three coordinate axes).
We observe that the SCF with different starting vector
converges monotonically to the same solution. 
The accelerated SCF greatly reduces the number of iterations and 
shows the quadratic convergence rate.

\begin{figure}[t]
\centering 
\includegraphics[width=0.45\textwidth]{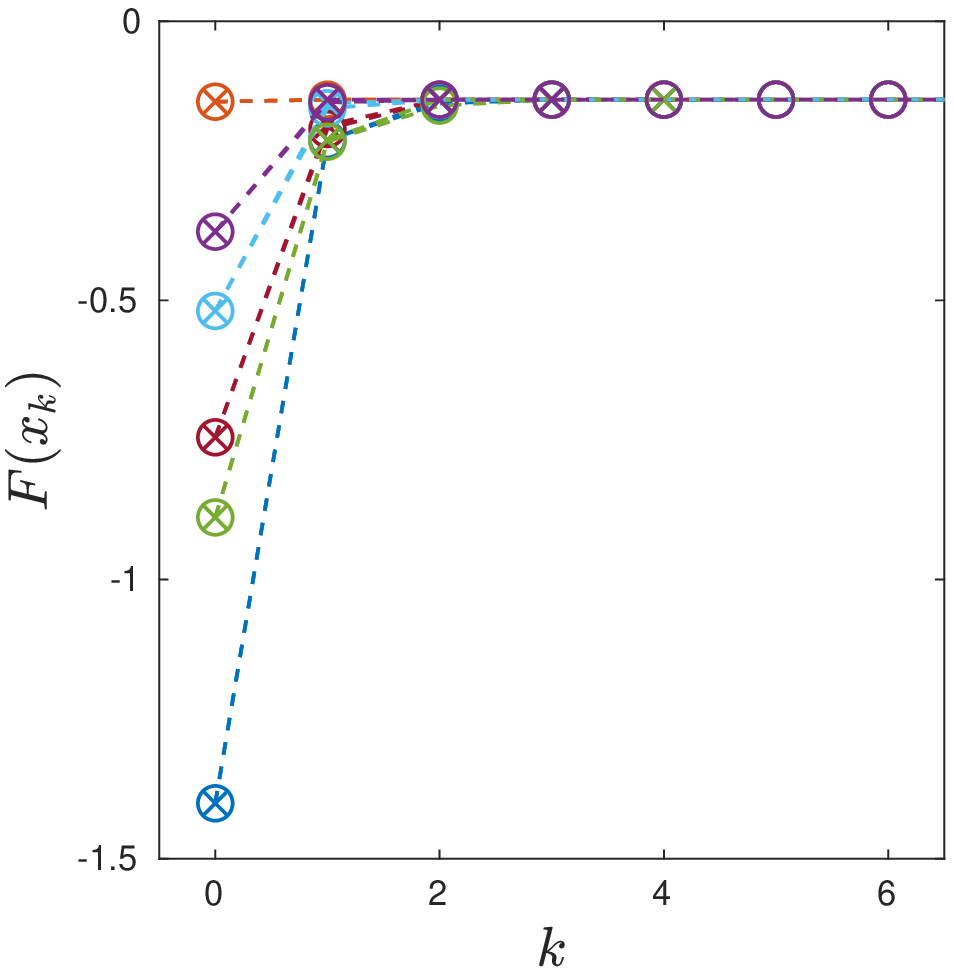}
\includegraphics[width=0.45\textwidth]{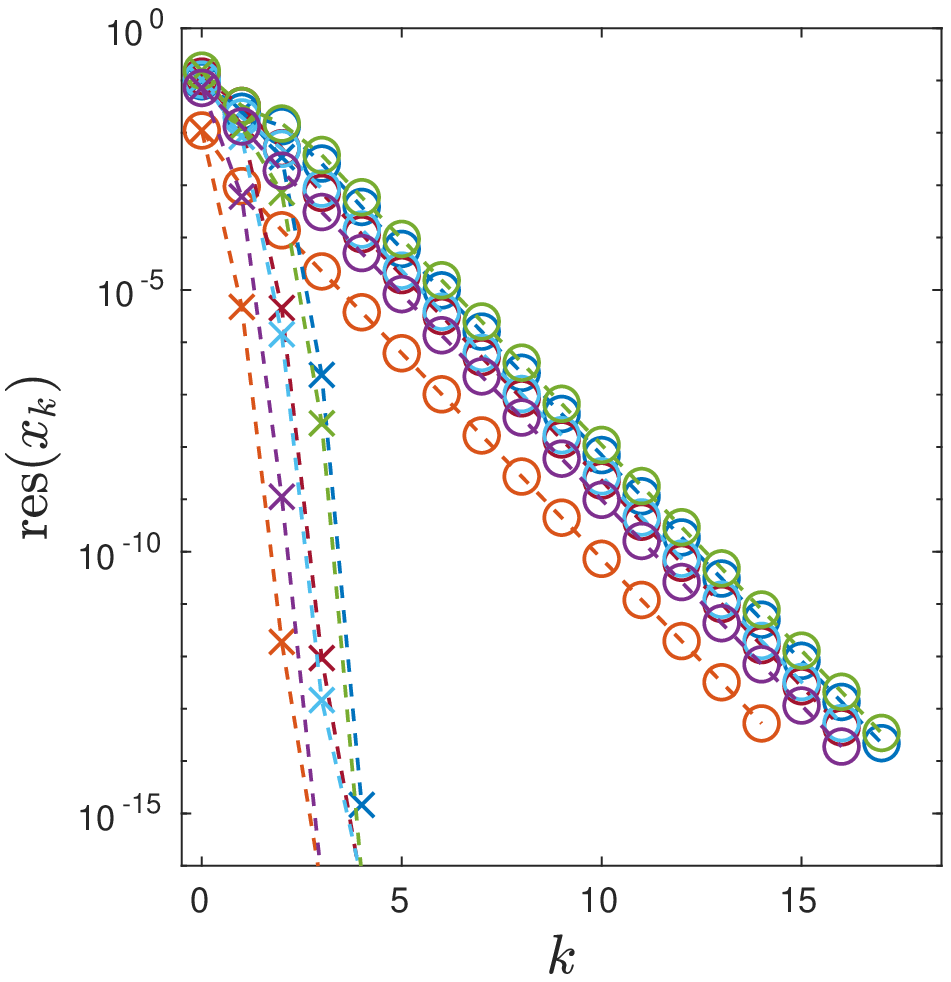}
\caption{
Left: Convergence history of $\objf(x_k)$ by the SCF
(marked `o') and accelerated SCF (marked  `$\times$'),
where each colored curve is a run with
a particular $x_0$ from $6$ different starting vectors.
Right: Relative residual norms~\eqref{eq:resd}. 
}\label{fig:dHDAE:lin}
\end{figure}

Recall that a computed $\widehat x_*$ may not be a global maximizer
of the aMax~\eqref{eq:dispoly}.
But we have at least an upper bound 
of the distance: 
\begin{equation}\label{eq:newBD}
d_{\rm sing} (P(\lambda))
\equiv \big(-2\cdot \max_{\|x\|=1} \objf(x)\big)^{1/2} \leq
\big(-2\cdot \objf(\widehat x_*)\big)^{1/2}.
\end{equation}
If the initial vector $x_0$ of the SCF is especially set to be
the eigenvector corresponding to the largest eigenvalue of $A_1$, 
then we have
\begin{equation}\label{eq:MehlBD}
         \big(-2\cdot \objf(\widehat x_*)\big)^{1/2}
         \leq
         \big(-2\cdot \objf(x_0)\big)^{1/2}
         \leq \delta_M := \big(-2\cdot \lambda_{\max}(A_1)\big)^{1/2},
\end{equation}
where the first inequality is by the monotonicity of the SCF
(see~\Cref{thm:mono})
and the second inequality is by the definition of $\objf(x)$~\eqref{eq:dhdaelin}
(recall that $R$ and $E$ are positive definite).
The quantity $\delta_M$ was introduced in~\cite[Thms.13 and 16]{Mehl:2021}
and used as an estimation of the quantity $d_{\rm sing} (P(\lambda))$.
It follows from the inequalities~\eqref{eq:newBD} and~\eqref{eq:MehlBD} 
that the SCF always produces a sharper upper bound 
of $d_{\rm sing} (P(\lambda))$. 
For example, in a numerical example, the SCF provides
an estimation $\big(-2\cdot \objf(\widehat x_*)\big)^{1/2} \approx 0.5989$. 
In contrast, $\delta_M \approx 0.6923$.

We note that the quantity $\delta_M$ has been revisited
in a recent work~\cite{Prajapati:2022}, where a computable upper bound was
proposed.  The latter, however, involves a more complicated optimization
of sum of generalized Rayleigh quotients 
and does not ensure a better estimation than $\delta_M$;
see~\cite[Thm.~3.7 and Example~3]{Prajapati:2022}.
In another related work~\cite{Guglielmi:2022}, 
the authors considered an
approach to estimate $d_{\rm sing}(P(\lambda))$,
based on the observation that $d_{\rm sing} (P(\lambda))$ is the smallest root of
a monotonically decreasing function $w$. A root finding method such as
the bisection can be applied.
The difficulty there lies in the evaluation of the function $w$. 
For a given $\epsilon$, evaluating $w(\epsilon)$ can be very 
expensive as it requires the solution of an optimization by 
a gradient flow method, which involves repeated solution
of Hermitian eigenvalue problems of size $n$.
} 
\end{example}

\begin{example} \label{eg:dhdae3} 
{\rm 
In this example, we consider a 
quadratic dHDAE system 
with a characteristic polynomial 
$$
P(\lambda) :=-\lambda G + K + \lambda D + \lambda^2 M,
$$
where $G=-G^T$ is skew symmetric, and $M$, $D$ and $K$ are symmetric
positive definite.
By~\Cref{eg:dhdae}, the computation of distance to singularity 
$d_{\rm sing} (P(\lambda))$
leads to the optimization~\eqref{eq:dispoly} and 
the mNEPv~\eqref{eq:hxdae}, where
\[
\objf(x)= x^TA_1x + \frac{1}{2}\sum_{i=2}^4(x^TA_ix)^2
\quad \mbox{and} \quad 
H(x) = A_1+ \sum_{i=2}^4(x^TA_ix)\cdot A_i,
\]
where 
$A_1 = G^2-M^2-D^2-K^2$, $A_2 = M$, $A_3 = D$, and $A_4 = K$.  

For numerical experiments, we consider a lumped-parameter 
mass-spring-damper system, 
$M\ddot u + D\dot u + Ku = f$; see, e.g.,~\cite{Veselic:2011},
with $n$ point-masses and $n$ spring-damper pairs.
The matrices $D$ and $K$ are interchangeable with
$DK=KD$ and are simultaneously diagonalizable.
We pick a random skew symmetric $G$ as in~\Cref{eg:quatic} 
to simulate the gyroscopic effect. 
The sizes $n$ of the matrices are set 
ranging from $500$ to $3000$.
For each set of testing matrices, we apply the SCF 
with $100$ different starting vectors $x_0$.
Again, those $x_0$ are computed from supporting points of
the joint-numerical range $W(\mathcal  A)\subset\R^4$
along $100$ randomly sampled directions $v\in\R^4$.

Similar to the linear system in~\Cref{eg:dhdae2}, 
the SCF always converge to a same solution from all $100$ 
different starting vectors.
The convergence history of the SCF and the accelerated SCF 
for a case of $n=1000$, with $8$ randomly selected starting vectors are 
depicted in~\Cref{fig:dHDAE:quad}.
It shows a same convergence behavior of the SCF and accelerated SCF 
as in the previous example.

\begin{figure}[t]
\centering 
\includegraphics[width=0.45\textwidth]{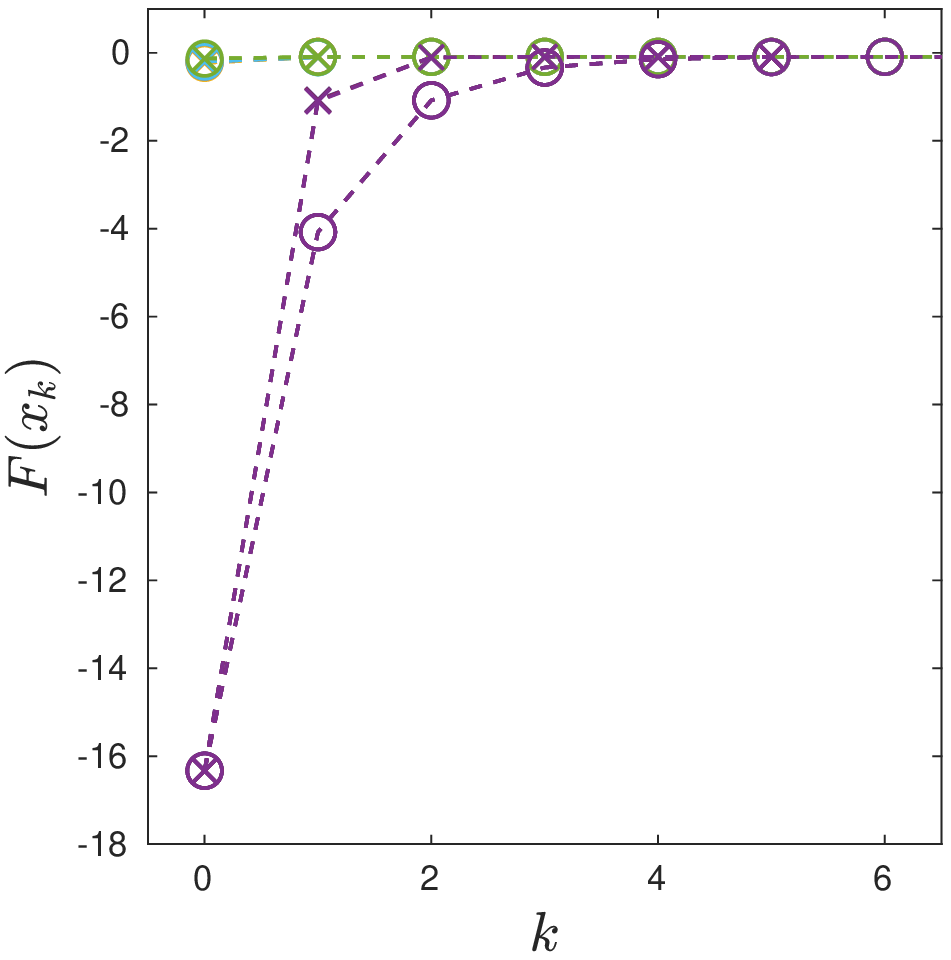}
\includegraphics[width=0.45\textwidth]{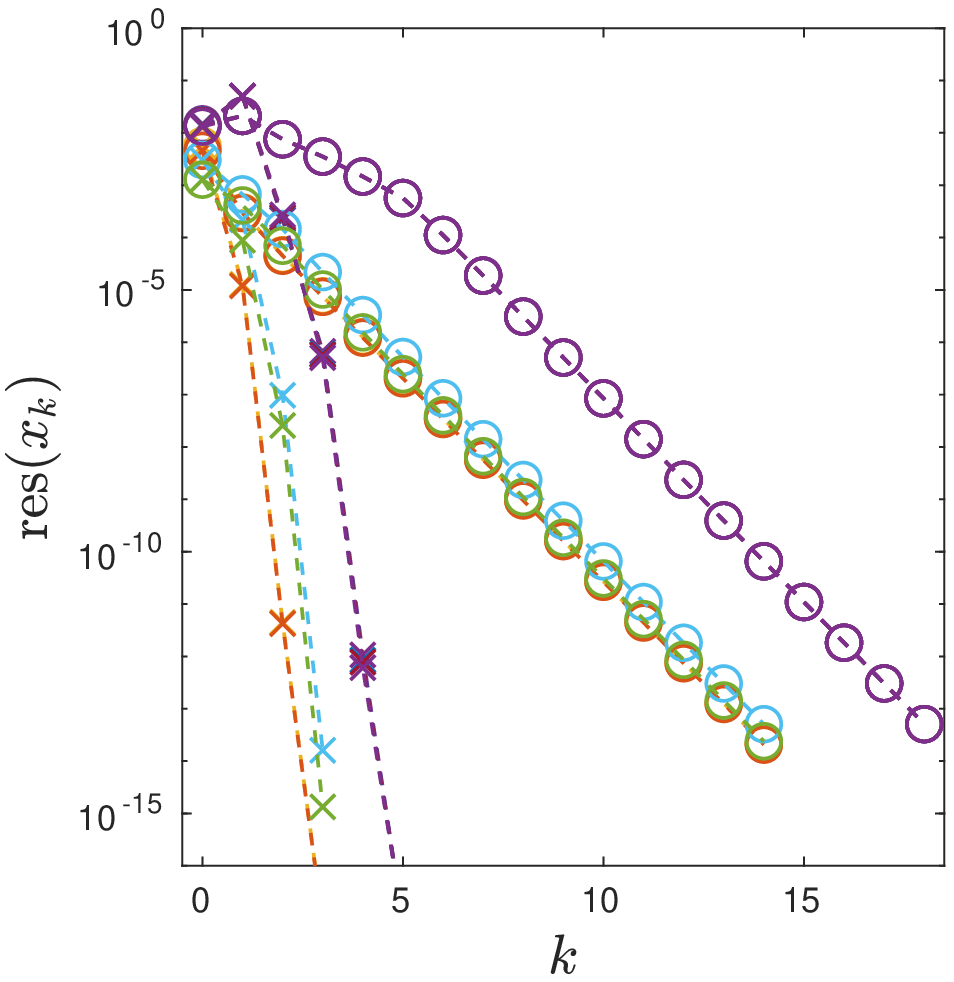}
\caption{
	Left: Convergence history of $\objf(x_k)$ by the SCF
	(marked `o') and accelerated SCF (marked  `$\times$'),
	where each colored curve is a run with
	a particular $x_0$ from $8$ 
	different starting vectors (lines overlapped).
	Right: Relative residual norms~\eqref{eq:resd}. 
}\label{fig:dHDAE:quad}
\end{figure}

\Cref{tab:dhdae3} summarizes the iteration number and computation time for
the algorithms from all testing cases.
We can see that the performance of both SCF and accelerated 
SCF are not much affected by the choice of initial vectors. 
Both algorithms converge rapidly,
and the accelerated SCF speed up to a factor between $2.5$ to $6.2$.

\begin{table}[tbhp]
\caption{
Number of iterations and computation time (in seconds) for various
problem sizes $n$.
Reported are average results from $100$ runs with different starting
vectors, with the largest deviations also marked.
}\label{tab:dhdae3}
\centering
\begin{tabular}{c|c|c|c|c}
\toprule
$n$ & algorithms & iterations & timing & speedup\\ 
\midrule 
\multirow{2}{*}{$500$}
& SCF 	& $17.0$ ($\pm 4.0$)& $1.18$ ($\pm 0.34$) & -- \\ 
& accel. SCF & ~$5.3$ ($\pm 1.3$)& $0.34$ ($\pm 0.14$) & $2.9$ to $4.5$\\ 
\midrule 
\multirow{2}{*}{$1000$}
& SCF & $21.3$ ($\pm 3.3$)& $6.54$ ($\pm 1.15$) & -- \\ 
& accel. SCF	 & ~$4.7$ ($\pm 1.3$)& $1.32$ ($\pm 0.48$) & $3.9$ to $6.2$\\ 
\midrule 
\multirow{2}{*}{$2000$}
& SCF & $17.0$ ($\pm 4.0$)& $4.91$ ($\pm 1.58$) & -- \\ 
& accel. SCF	 & ~$4.8$ ($\pm 1.8$)& $1.52$ ($\pm 0.73$) & $2.5$ to $4.9$\\ 
\midrule 
\multirow{2}{*}{$3000$}
& SCF & $16.9$ ($\pm 3.9$)& $20.51$ ($\pm 5.33$) & -- \\ 
& accel. SCF	 & ~$5.2$ ($\pm 1.2$)& $6.39$ ($\pm 1.93$)  & $2.7$ to $4.0$\\ 
\bottomrule
\end{tabular}
\end{table}
} \end{example}

 
\begin{example} \label{eg:tensor} 
{\rm ~
As discussed in~\Cref{sec:tensor},
the problem of best rank-one approximation for
a partial-symmetric tensor $T\in\R^{n\times n\times m}$ 
leads to a quartic optimization~\eqref{eq:tquartic}
and the corresponding mNEPv~\eqref{eq:hxqrt},
where the coefficient matrices
are $A_i:=T(:,:,i)\in\R^{n\times n}$ for $i=1,\dots,m$.

For non-negative tensors, the objective function $\objf(x)$
of~\eqref{eq:tquartic} 
satisfies $\objf(|x|) \geq \objf(x)$, where $|\cdot|$ denotes 
componentwise absolute value.
Therefore, it is advisable to start the SCF~\eqref{eq:scf}
with a non-negative initial $x_0$.
Note that if $x_k\geq 0$ then $H(x_k)\geq 0$.
By the Perron-Frobenius theorem~\cite{Horn:2012}, 
the eigenvector $x_{k+1}$ corresponding to the largest eigenvalue 
of $H(x_k)$ is also non-negative.
This implies that the subsequent iterates $x_k$ by 
the SCF will remain non-negative.

We note that for a non-negative tensor $T$ and 
a non-negative initial $x_0$, the SCF~\eqref{eq:scf} 
is indeed equivalent to the Alternating Least Squares (ALS) algorithm 
for finding the best rank-one approximation~\eqref{eq:tensor}.
Recall that in \Cref{sec:tensor}, 
the best rank-one approximation~\eqref{eq:tensor} is turned into 
the maximization problem: 
\begin{equation}\label{eq:alsobj}
    \max_{\|x\|=1,\, \|z\|=1}
    \left(z^T\cdot \rqs(x) \right)^2,
\end{equation}
where $\rqs(x) = [x^T A_1 x, \ldots, x^T A_m x]^T$.
Maximizing alternatively with respect to $z$ and $x$
leads to the alternating iteration: 
\begin{equation}\label{eq:als}
\left\{\begin{aligned}
    z_{k+1} & = \argmax_{\|z\|=1}
    \left(z^T\cdot \rqs(x_k) \right)^2
    = \alpha_k\cdot \rqs(x_k), \\
    x_{k+1} & =
    \argmax_{\|x\|=1} \left(z_{k+1}^T\cdot \rqs(x) \right)^2
    =
    \argmax_{\|x\|=1} \left(x^T\cdot H(x_k)\cdot x \right)^2,
\end{aligned}\right.
\end{equation}
for $k=1,2,3,\dots$, where $\alpha_k >0 $ is a normalization factor for $z_{k+1}$.
Recall that $H(x_k)\geq 0 $ if $x_k\geq 0$.
The maximizer $x_{k+1}$ of~\eqref{eq:als} 
is the eigenvector corresponding to the largest eigenvalue of $H(x_k)$,
i.e., $H(x_k) x_{k+1} = \lambda_{\max} \cdot x_{k+1}$, due to the
Perron-Frobenius theorem.
Therefore~\eqref{eq:als} coincides with the SCF.
The ALS algorithms are commonly used for low-rank approximations in 
tensor computations~\cite{Kolda:2009}.
The scheme~\eqref{eq:als} accounts for the partial symmetry of 
the rank-one tensor $\lambda\cdot x\otimes x\otimes z$ such that
only two vectors $x$ and $z$ are alternated.


In \Cref{alg:scflf},
we use MATLAB~\texttt{eigs} for solving the eigenvalue computation
and~\texttt{minres} for solving the linear system 
in the acceleration~\eqref{eq:syminverse}.
We use an adaptive error tolerance 
$\mbox{Tol}=\min\{10^{-3}, \mbox{res}(x_k)^2\}$ 
for each call of~\texttt{eigs} and~\texttt{minres}.

For numerical experiments, we use the following thrid order 
partial-symmetric tensors.
\begin{enumerate}
\item The {\em New Orleans tensor}~\footnote{
        From~\cite{Viswanath:2009}, available at 
            \url{http://socialnetworks.mpi-sws.org/data-wosn2009.html}.},
        created from the Facebook New Orleans network.
        The original data contains a list of all of the user-to-user
        links (undirected) and a timestamp for the establishment of the link. 
	The links are collected on a monthly (30 day) basis for $20$ months,
        with each month corresponding to a slice of $T(:,:,i)$. 
        The size of the resulting tensor $T$ is $63891\times 63891\times 20$
        with $477778$ nonzeros.

    \item 
        The {\em Princeton tensor}~\footnote{
        From~\cite{Traud:2012}, available at 
        \url{https://archive.org/details/oxford-2005-facebook-matrix.}},
        created from a Facebook `friendship' network at 
        Princeton, following the setting up in~\cite{Elden:2020}.
        The element $T(i,j,k)=1$ 
        if students $i$ and $j$ are friends and one of them has 
		a status flag $k$. 
        The size of the resulting tensor $T$ is 
        $6593\times 6593\times 6$ with $70248$ nonzeros. 

    \item
        The {\em Reuters tensor}~\footnote{
        From~\cite{Batagelj:2003}, available at
        \url{http://vlado.fmf.uni-lj.si/pub/networks/data/CRA/terror.htm}.},
        created from a news network
        based on all stories released by the news agency Reuters,
        concerning the September 11 attack
        during the 66 consecutive days beginning at September 11, 2001. 
        The vertices of the network are words,
        and there is an edge between two words if 
        they appear in the same sentence, 
        with the weight of an edge being the frequency. 
        The size of the tensor $T$ is $13332\times 13332\times 66$
        with $486894$ nonzeros.

\end{enumerate}
All three tensors are non-negative and sparse 
(density $\approx 10^{-5}$),
so are the corresponding coefficient matrices 
$A_i = T(:,:,i)$ for $i=1,\dots,m$.
We use $100$ randomly generated
and non-negative starting vectors $x_0$ to run the SCF
(using \texttt{x0=abs(randn(n,1))}).
The convergence history is reported in~\Cref{fig:tensor}.
We observe that from different starting vectors,
~\Cref{alg:scflf} always converge to the 
same solution and the convergence rate appears not affected 
by the choice of starting vectors.  
Also, the accelerated SCF significantly reduces
the number of the SCF iterations and has a quadratic convergence rate.

The computed optimal values of the objective function 
and timing of the ALS (i.e., the SCF) and
the accelerated SCF are reported in~\Cref{tab:tensor}. 
The accelerated SCF speeds up the ALS by a factor of 2.
This demonstrates one of the benefits of the NEPv reformulation
for allowing the development of effective acceleration scheme of the ALS.

\begin{figure}[t]
\centering 
\includegraphics[width=0.3\textwidth]{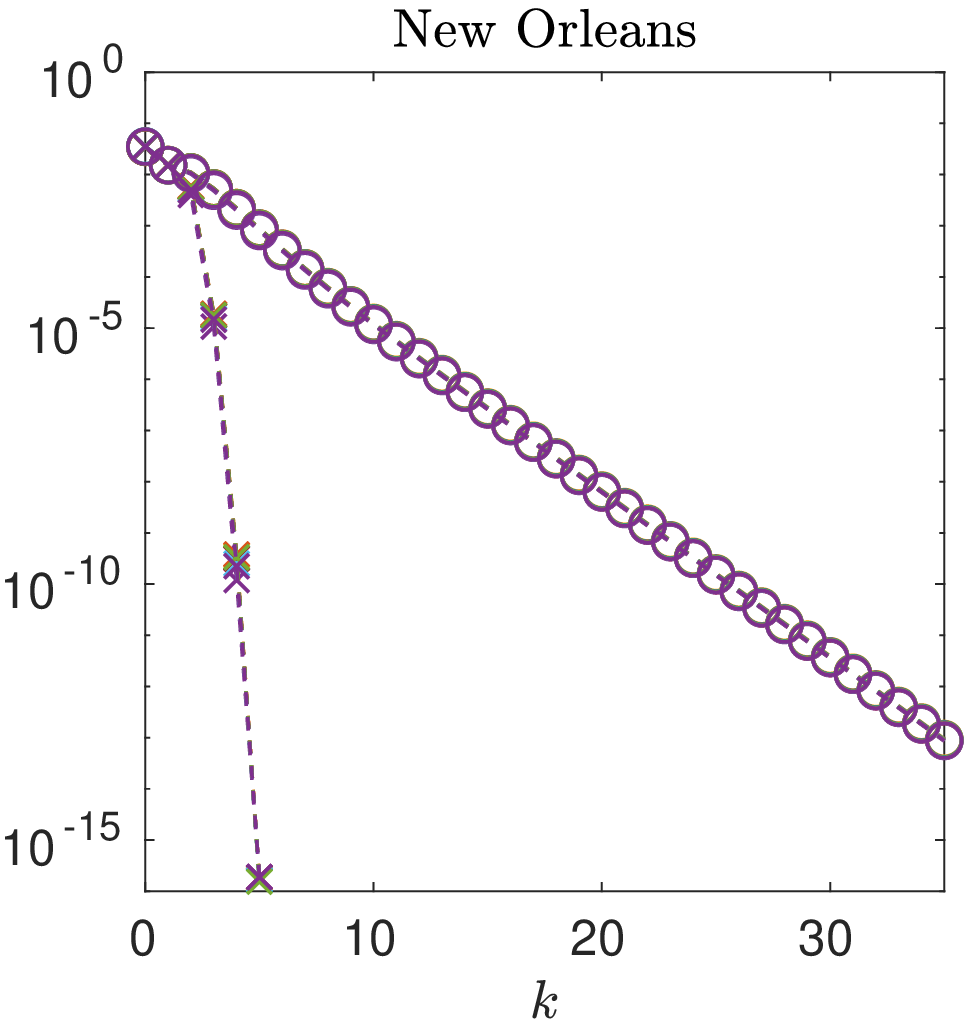}
\includegraphics[width=0.3\textwidth]{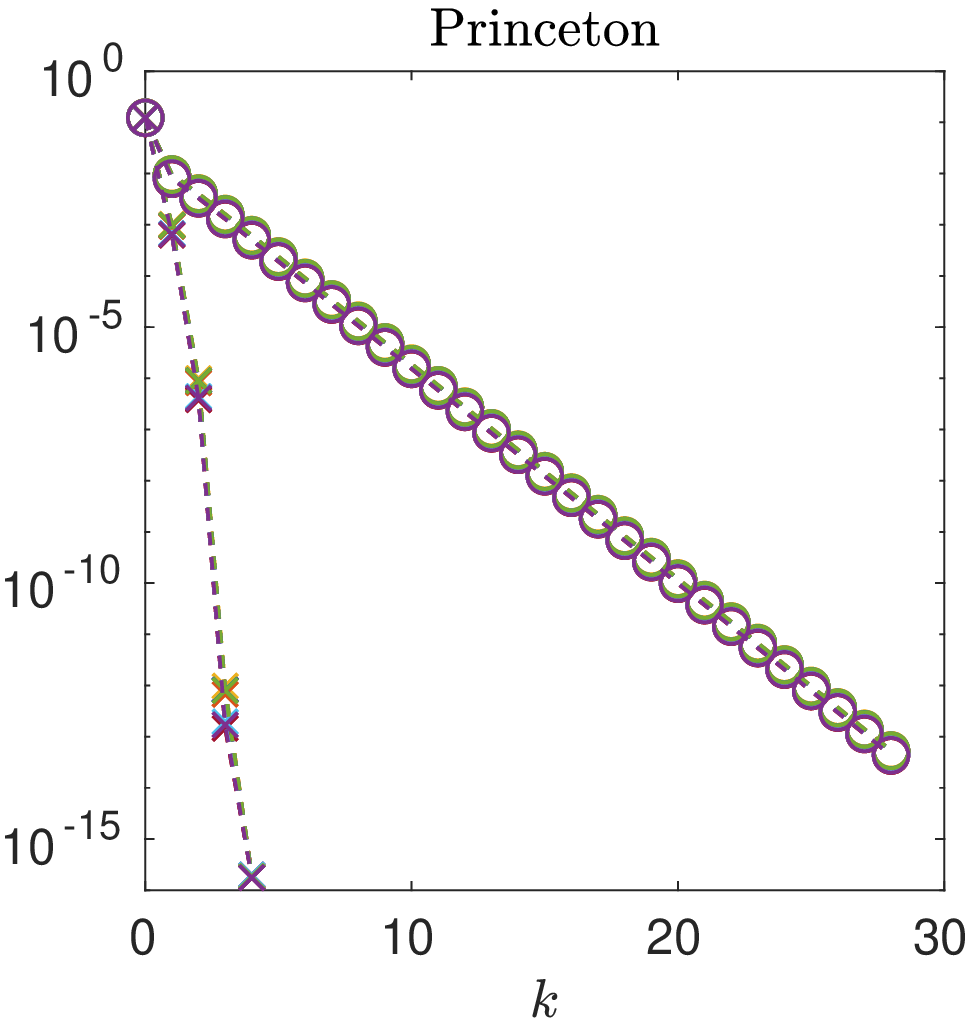}
\includegraphics[width=0.3\textwidth]{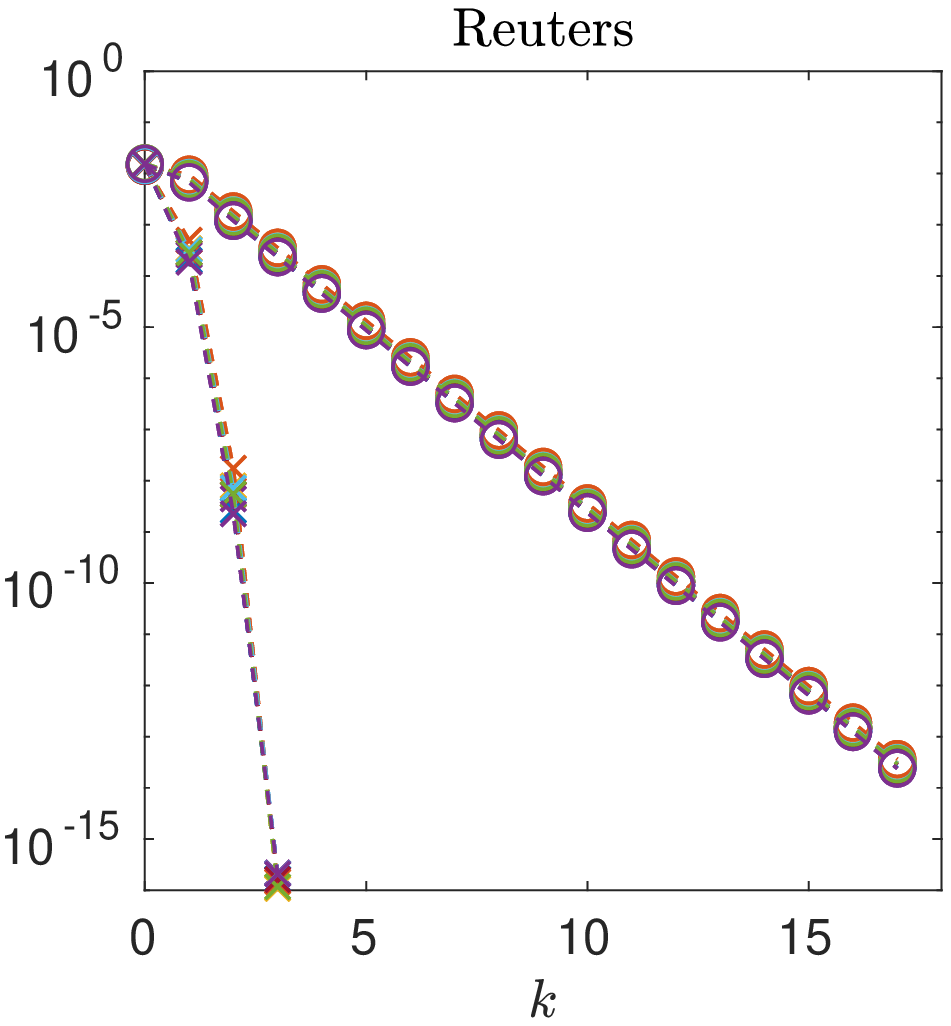}
\caption{
Convergence history of relative residual norms $\mbox{res}(x_k)$~\eqref{eq:resd}
by the SCF (`$\circ$') and accelerated SCF (`$\times$').
Each colored curve represents a run with a different starting vector
from $100$ randomly generated $x_0\geq 0$ (lines overlapped).
}\label{fig:tensor}
\end{figure}

\begin{table}[t]
\caption{
Objective function value $\objf(x_*)$ of the computed eigenvectors $x_*$,
total iteration number, and computation time (in seconds).
Reported are average results from $100$ runs with different starting
vectors, with the largest deviations also marked.
}\label{tab:tensor}
\begin{tabular}{c|c|c|c|c}
\toprule
tensor & algorithms & $\objf(x_*)$ & iterations & timing\\ 
\midrule 
\multirow{2}{*}{New Orleans}
& ALS & $257.1509714238355$ ($\pm 3\cdot 10^{-13}$) & $36$ & $1.69$ ($\pm 2\cdot 10^{-1}$)  \\ 
& accel. SCF &  $257.1509714238355$ ($\pm 3\cdot 10^{-13}$) & 6 & $0.75$ ($\pm 1\cdot 10^{-1}$)\\
\midrule 
\multirow{2}{*}{Princeton}
& ALS & $26441.19404229125$ ($\pm 3\cdot 10^{-11}$) & $30$ & $0.42$ ($\pm 4\cdot 10^{-2}$)  \\ 
& accel. SCF & $26441.19404229123$ ($\pm 2\cdot 10^{-12}$) & $5$ & $0.18$ ($\pm 5\cdot 10^{-2}$)  \\ 
\midrule 
\multirow{2}{*}{Reuters}
& ALS & $118777.1084768529$ ($\pm 1\cdot 10^{-10}$) & $18$ & $0.65$ ($\pm 4\cdot 10^{-2}$)  \\ 
& accel. SCF& $118777.1084768529$ ($\pm 1\cdot 10^{-10}$) & $4$ & $0.32$ ($\pm 2\cdot 10^{-2}$)  \\ 
\bottomrule
\end{tabular}
\end{table} 

}
\end{example}



\section{Concluding remarks}\label{sec:conclusion}

We investigated the mNEPv~\eqref{eq:nepv}.
A variational characterization for the mNEPv is revealed. 
Based on the variational characterization, we provided 
a geometric interpretation of the SCF iterations for solving the mNEPv. 
The geometry of the SCF illustrates the global monotonic convergence of 
the algorithm and leads to a rigorous 
proof of the global convergence of the SCF.
In addition, we presented an inverse-iteration based 
scheme to accelerate the convergence of the SCF.  
Numerical examples demonstrated the 
effectiveness of the accelerated SCF for solving the mNEPv 
arising from different applications. 
By the intrinsic connection between the mNEPv~\eqref{eq:nepv} 
and the aMax~\eqref{eq:maxf}, we developed an NEPv approach 
for solving the aMax. Algorithmically, it allows the use
of state-of-the-art eigensolvers for fast solution of the aMax.

Most results presented in this work can be extended to
the case NEPv~\eqref{eq:nepv} with $h_i$ being non-decreasing and
locally Lipschitz continuous, i.e.,
\begin{equation}\label{eq:loclip}
c_i(t) := \limsup_{t+\delta t\in[\ell_i,u_i] \atop \delta t\to 0}
        \frac{h_i(t+\delta t) -h_i(t)}{\delta t} < \infty
\end{equation}
for all $t\in  [\ell_i,u_i]$ with $\ell_i=\lambda_{\min}(A_i)$,
$u_i=\lambda_{\max}(A_i)$, and $i=1,\dots,m$.
For example, in complete analogy to~\Cref{thm:vc}, it is possible 
to establish 
a variational characterization of such NEPv.
%
%
In addition, we expect this work to serve as the basis for the study of a more 
general class of NEPv in the form of 
\begin{equation}\label{eq:affinenepv}
	H(x)x = \lambda x
	\quad\text{with}\quad
	H(x) = \sum_{i=1}^m h_i(\rqs(x)) A_i,
\end{equation}
with an $m$-tuple of Hermitian matrices $\mathcal A =(A_1,\dots, A_m)$,
$\rqs$ by~\eqref{eq:rho2}, and $h_i:\R^m\to \R$ to be given functions.
Similar to the mNEPv~\eqref{eq:nepv}, 
one can expect to establish variational characterizations
of~\eqref{eq:affinenepv},
at least for particular functions of $h_i$. 
How to extend the theoretical analysis and geometric interpretation of
the SCF to a general NEPv~\eqref{eq:affinenepv}
is left to future research.

%
%
%
%
%
%
%
%
%
%

\bibliographystyle{plain}
\bibliography{nepvrefs}

\end{document}